\renewcommand{\theresults}{\ifnum0=\c@subsection\thesection.\arabic{results}\else\thesubsection.\arabic{results}\fi}
\DeclareMathOperator{\Fix}{\mathfrak{Fix}}
\DeclareMathOperator{\Dom}{\mathfrak{Dom}}
\newcommand\ack{\section*{Acknowledgements}}
\begin{document}

	\title{Elliptic fibrations and Hilbert Property}
	\author{Julian Lawrence Demeio}

\maketitle

\begin{abstract}
	For a number field $K$, an algebraic variety $X/K$ is said to have the Hilbert Property if $X(K)$ is not thin. We are going to describe some examples of algebraic varieties, for which the Hilbert Property is a new result.
	
	The first class of examples is that of smooth cubic hypersurfaces with a $K$-rational point in $\P_n/K$, for $n \geq 3$. These fall in the class of unirational varieties, for which the Hilbert Property was conjectured by Colliot-Thélène and Sansuc.
	
	We then provide a sufficient condition for which a surface endowed with multiple elliptic fibrations has the Hilbert Property.  As an application, we prove the Hilbert Property for a class of K3 surfaces, and some Kummer surfaces.
\end{abstract}

\section{Introduction}

This paper is concerned with providing some examples of varieties with the \textit{Hilbert Property}, concerning the set of $k$-rational points $X(k)$, for an algebraic variety $X$ defined over a field $k$.

A geometrically irreducible variety $X$ over a field $k$ is said to have the \textit{Hilbert Property} if, for any finite morphism $\pi:E \rightarrow X$, such that $X(k)\setminus \pi(E(k))$ is not Zariski-dense in $X$, there exists a rational section of $\pi$ (see  \cite[Ch. 3]{Serre} for an introduction of the Hilbert Property).

Some motivation for the study of the Hilbert Property comes from the following conjecture, of which a proof would settle the Inverse Galois Problem, as noted in \cite{congetturastrong}:

\begin{conjecture}[Colliot-Thélène, Sansuc]\label{congetturona}
	Let $X$ be a unirational variety over a number field, then $X$ has the Hilbert Property.
\end{conjecture}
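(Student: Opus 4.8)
The plan is to transfer the Hilbert Property from projective space, where it is the classical Hilbert irreducibility theorem, to the unirational variety $X$. Fix the number field $k$ and recall the equivalent formulation: $X$ has the Hilbert Property precisely when $X(k)$ is not thin, i.e. when $X(k)$ cannot be written as a finite union $Z(k)\cup\bigcup_{i=1}^{m}\pi_i(Y_i(k))$, where $Z\subsetneq X$ is a proper closed subvariety and each $\pi_i\colon Y_i\to X$ is a finite morphism of degree $\geq 2$ from an irreducible variety. By unirationality there is a dominant $k$-rational map $f\colon\mathbb{P}^n\dashrightarrow X$ with $n=\dim X$; it is generically finite, of degree $d=[k(\mathbb{P}^n):k(X)]$, and the proof proceeds by induction on $d$. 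If $d=1$ then $X$ is $k$-rational and the Hilbert Property is immediate, so assume $d\geq 2$.

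Suppose, for contradiction, that $X(k)=Z(k)\cup\bigcup_i\pi_i(Y_i(k))$ is a thin covering as above. I would pull each cover back along $f$: over the open locus where $f$ is defined, form the normalization $\widetilde{Y}_i$ of the fibre product, obtaining finite covers $\widetilde{\pi}_i\colon\widetilde{Y}_i\to\mathbb{P}^n$. Each $\widetilde{Y}_i$ decomposes according to the factorization of $k(Y_i)\otimes_{k(X)}k(\mathbb{P}^n)$, and the decisive dichotomy is whether this algebra has a factor isomorphic to $k(\mathbb{P}^n)$, equivalently whether $f$ lifts $k$-rationally through $\pi_i$. Call $\pi_i$ \emph{avoidable} if it does not (so every component of $\widetilde{\pi}_i$ has degree $\geq 2$ over $\mathbb{P}^n$) and \emph{factoring} otherwise. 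Crucially, a factoring cover corresponds to a subextension of $k(\mathbb{P}^n)/k(X)$, and since this is a single finite extension there are only finitely many such covers up to isomorphism over $X$.

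For the avoidable covers I would invoke Hilbert irreducibility for $\mathbb{P}^n$: the set of $t\in\mathbb{P}^n(k)$ lying in $\widetilde{\pi}_i(\widetilde{Y}_i(k))$ for some avoidable $i$, together with the preimage of $Z$, is thin, hence its complement $S\subseteq\mathbb{P}^n(k)$ is not thin. By construction $f(S)$ meets no $Z(k)$ and no $\pi_i(Y_i(k))$ with $\pi_i$ avoidable, so $f(S)\subseteq\bigcup_{i\ \mathrm{factoring}}\pi_i(Y_i(k))$. For each factoring cover the parametrization factors as $\mathbb{P}^n\dashrightarrow Y_i\xrightarrow{\pi_i}X$ with $\mathbb{P}^n\dashrightarrow Y_i$ dominant of degree $d/\deg\pi_i<d$; thus $Y_i$ is again unirational with a strictly lower parametrization degree, and the inductive hypothesis endows $Y_i$ with the Hilbert Property.

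The main obstacle — and the reason the conjecture remains open in general — lies exactly here, in closing the induction against the finitely many factoring covers. One is reduced to the following situation: a finite cover $\pi_i\colon Y_i\to X$ of degree $\geq 2$ that is surjective on $k$-points over a non-thin subset of $X(k)$. To finish one would need to produce a rational section of such a $\pi_i$, or equivalently to rule out that the factoring covers jointly exhaust $X(k)$. There is no general mechanism for this: a finite cover can be surjective on rational points without admitting any section, and unirational parametrizations need not factor through a minimal or canonical cover, so the Galois-theoretic control one has over the single extension $k(\mathbb{P}^n)/k(X)$ does not by itself force a section over $X$. This is precisely the point at which additional geometric input is required — descent-type hypotheses, or the multiple-fibration structure exploited in the special cases treated below — and where a proof of the full Colliot-Thélène--Sansuc conjecture would have to supply a genuinely new idea.
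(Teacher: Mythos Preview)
The statement you are attempting is not a theorem in the paper but a \emph{conjecture}; the paper explicitly labels it as such and makes no attempt to prove it in general. The paper only establishes special cases (smooth cubic hypersurfaces with a rational point, certain K3 surfaces, certain Kummer surfaces), each by exploiting additional geometric structure --- multiple elliptic fibrations --- that a bare unirational variety need not possess. So there is no ``paper's own proof'' to compare against.

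That said, your write-up is not really a proof proposal either, and you acknowledge this yourself in the final paragraph. The reduction you outline is the standard one: pull back the putative thin cover along the unirational parametrization $f:\mathbb{P}^n\dashrightarrow X$, kill the components of degree $\geq 2$ over $\mathbb{P}^n$ using Hilbert irreducibility, and be left with the covers $\pi_i:Y_i\to X$ through which $f$ factors. Your dichotomy and the finiteness of the factoring covers are correct. The gap you name is genuine and is exactly the obstruction: knowing that each $Y_i$ is itself unirational (with smaller parametrization degree) lets you conclude inductively that $Y_i$ has the Hilbert Property, but the Hilbert Property of $Y_i$ says nothing about whether $\pi_i:Y_i\to X$ admits a rational section, nor does it prevent $\pi_i(Y_i(k))$ from covering all of $X(k)$ outside a closed set. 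There is no known mechanism to close this step in general, which is precisely why the Colliot-Th\'el\`ene--Sansuc conjecture remains open.

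One small inaccuracy worth flagging: you write ``there is a dominant $k$-rational map $f:\mathbb{P}^n\dashrightarrow X$ with $n=\dim X$''. Unirationality only guarantees a dominant rational map from \emph{some} $\mathbb{P}^N$, not necessarily with $N=\dim X$; one can then slice down to a generically finite map from $\mathbb{P}^{\dim X}$, but this requires a word of justification. This is cosmetic and does not affect the substance of the obstruction you identify.
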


The first result of this paper, in Section \ref{cubics}, is the Hilbert Property for smooth cubic hypersurfaces of $\dim \geq 2$, defined over a number field $K$ and with at least one $K$-rational point. Since, under these hypothesis, cubic hypersurfaces are $K$-unirational \cite{segre}, this result gives positive examples of Conjecture \ref{congetturona}. In Section \ref{surfaces} we prove the following theorem, which generalizes \cite[Theorem 1.4]{myarticle1}.

\begin{theorem}\label{Thm:teoremone}
	Let $E$ be a projective smooth geometrically connected algebraic surface, defined over a number field $K$. Suppose that there exist $n \geq 2 $ elliptic fibrations $\pi_1,\dots,\pi_n:E \rightarrow \P_1$ . Let $F \subset E$ denote the union of the divisors of $E$ that are contained, for each $i=1,\dots, n$, in a fiber of $\pi_i$. If $E \setminus F$ is simply connected and $E(K)$ is Zariski-dense in $E$, then $E$ has the Hilbert Property.
\end{theorem}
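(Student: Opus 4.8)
The plan is to establish the Hilbert Property in its equivalent form, that $E(K)$ is not thin, by using the fibrations $\pi_1,\dots,\pi_n$ to defeat any candidate thin set. It suffices to show that for every finite collection of finite surjective morphisms $\varphi_i\colon E_i'\to E$ $(i=1,\dots,N)$ with $E_i'$ normal and $\deg\varphi_i\ge 2$, the set $E(K)\setminus\bigcup_i\varphi_i(E_i'(K))$ is Zariski-dense. Since $E(K)$ is Zariski-dense the type-I pieces (rational points of proper closed subvarieties) are harmless, and since a component whose field of constants strictly contains $K$ has no $K$-point, one may assume each $E_i'$ is geometrically integral. I would then treat each $\varphi:=\varphi_i$ in terms of how it restricts over the generic fibres of the $\pi_j$.

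Here the simple-connectedness hypothesis enters. Writing $\mathcal E_j$ for the generic fibre of $\pi_j$ (a genus-one curve over the function field of $\P_1$), the restriction $\varphi^{-1}(\mathcal E_j)\to\mathcal E_j$ is a finite cover, and I claim it is geometrically ramified for \emph{some} $j=j(\varphi)$. If instead it were geometrically \'etale for every $j$, then no component of the branch divisor of $\varphi$ could dominate $\P_1$ under any $\pi_j$, i.e.\ every such component would lie in a fibre of every $\pi_j$ and hence in $F$; so $\varphi$ would be \'etale over the simply connected $E\setminus F$, would therefore split, and --- $E_i'$ being geometrically integral --- would be an isomorphism, contradicting $\deg\varphi\ge 2$.

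Fixing $\varphi$ and $j=j(\varphi)$, form the Stein factorisation $E_i'\xrightarrow{h}B\xrightarrow{\beta}\P_1$ of $\pi_j\circ\varphi$. The generic fibre $\mathcal C$ of $h$ is a smooth projective geometrically connected curve mapping to the base-change of $\mathcal E_j$ to $K(B)$; the degree of this map is $\deg\varphi/\deg\beta\ge 2$ (were it $1$, $E_i'$ would be birational to the curve $B$), and it is ramified by the previous paragraph, so Riemann--Hurwitz forces $g(\mathcal C)\ge 2$. By Hilbert irreducibility applied to $\beta$, for all $t\in\P_1(K)$ outside a thin set (and outside finitely many further values), the $K$-fibre $\varphi^{-1}(\pi_j^{-1}(t))$ is, over the number field $\kappa(\beta^{-1}(t))$, a twist of $\mathcal C$, hence has finitely many points there by Faltings; so $\varphi^{-1}(\pi_j^{-1}(t))(K)$ is finite. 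Complementing this, Zariski-density of $E(K)$ forces, for each $\pi_j$, a non-thin set of $t\in\P_1(K)$ with $\operatorname{rank}\pi_j^{-1}(t)(K)\ge 1$: either the generic fibre already has positive Mordell--Weil rank, and one invokes Silverman's specialization theorem, or it does not, in which case --- by Merel's uniform boundedness of torsion --- $E(K)$ would, off finitely many fibres, lie on finitely many torsion multisections and hence in a finite union of curves, contradicting Zariski-density.

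If every $\varphi_i$ were controlled by the same fibration $\pi_1$, we would be done: intersecting the non-thin set of positive-rank fibres of $\pi_1$ with the complements of the thin sets just produced yields a non-thin $T\subseteq\P_1(K)$ with $\pi_1^{-1}(t)(K)\setminus\bigcup_i\varphi_i(E_i'(K))$ cofinite in the infinite set $\pi_1^{-1}(t)(K)$ for $t\in T$, and the union over $t\in T$ is Zariski-dense in $E$. The main obstacle is the general situation, where different $\varphi_i$ are controlled by different $\pi_j$ and one must merge the individual controls into a single Zariski-dense set of $K$-points avoiding all of $\bigcup_i\varphi_i(E_i'(K))$ at once --- without the circularity that ``avoiding a thin set of $\P_1$, pulled back to $E$'' is itself of the form one is trying to defeat. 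I would do this by induction on $N$: restricting the covers controlled by $\pi_{j'}$ to a positive-rank fibre of $\pi_j$ turns them into thin subsets of that fibre, together with auxiliary covers of $E$ pulled back from the base of $\pi_j$; the latter have branch locus a union of fibres of $\pi_j$, hence are ramified over the generic fibre of some $\pi_{j'}$ with $j'\ne j$ and fall again under the genus-$\ge 2$/Faltings mechanism. One must check this process terminates and never exhausts the $K$-points of the ambient positive-rank fibre. The two engines are that mechanism and the positive-rank input; the hypotheses $n\ge 2$ and ``$E\setminus F$ simply connected'' are used only in the second paragraph, to guarantee every cover is reachable by one of them.
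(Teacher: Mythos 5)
Your first half matches the paper's strategy: reduce to geometrically integral covers, use simple connectedness of $E\setminus F$ to force each cover to be ramified over the generic fibre of at least one $\pi_j$, and use Riemann--Hurwitz plus Faltings to make the $j$-ramified covers contribute only finitely many points on each good fibre of $\pi_j$; the existence of a dense set of positive-rank fibres is also in the paper (its Lemma on generic rank). But the step you flag as ``the main obstacle'' is precisely where your route breaks, and you have not supplied the idea that closes it. A cover whose branch divisor lies in fibres of $\pi_j$ (your ``auxiliary covers'') need \emph{not} be pulled back from the base of $\pi_j$: its restriction to the generic fibre can be a nontrivial isogeny-like cover of the genus-one curve. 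Restricted to a positive-rank fibre $\pi_j^{-1}(t)$, the image of its $K$-points is a finite-index coset of the Mordell--Weil group (weak Mordell--Weil). This is indeed a thin subset of the fibre, but an elliptic curve does \emph{not} have the Hilbert Property: finitely many such cosets can exhaust $\pi_j^{-1}(t)(K)$ entirely. So ``restricting the covers to the fibre turns them into thin subsets of that fibre'' gives you nothing, and there is no way to guarantee that the process ``never exhausts the $K$-points of the ambient positive-rank fibre'' --- it genuinely may.

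The paper's resolution is a different logical move, and it is the heart of the proof. It invokes a group-theoretic lemma (Corvaja--Zannier): if a finitely generated abelian group of positive rank minus a finite union of finite-index cosets is finite, then those cosets already cover the whole group. On a good fibre of $\pi_n$, the $n$-ramified covers contribute only a finite set (Faltings) and the $n$-unramified covers contribute finite-index cosets; since the fibre's Mordell--Weil group has positive rank, the lemma lets one \emph{discard the ramified covers entirely} and conclude $E(K)\subset\bigcup_{i\ n\text{-unramified}}\phi_i(Y_i(K))\cup(\text{proper closed})$. One then inducts on the number of fibrations (not on the number of covers): repeat with $\pi_{n-1}$ among the surviving covers, and after $n$ steps every surviving cover would be unramified over $E\setminus F$, contradicting simple connectedness. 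Without the coset description of the unramified images and this ``either the cosets miss infinitely many points, or they cover everything and we recurse on another fibration'' dichotomy, your induction on $N$ does not go through.
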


We then use this theorem to give explicit examples of K3 surfaces with the Hilbert Property.
The examples are produced starting from a construction presented in \cite{garbagnati}, by Garbagnati and Salgado. Finally, we prove the Hilbert Property for some Kummer surfaces, for which the Hilbert Property was suggested to be true by Corvaja and Zannier \cite{articoloHP}.

\section{Background}

This section contains some preliminaries. In particular, in the last paragraph, we shall recall some known results, that will be used in later sections, concerning the Hilbert Property. Moreover, we shall take care here of most of the notation that will be used in the paper.

\paragraph{Notation}

Throughout this paper, except when stated otherwise, $k$ denotes a perfect field and $K$ a number field. A ($k$-)\textit{variety} is an algebraic quasi-projective variety (defined over a field $k$), not necessarily irreducible or reduced. Unless specified otherwise, we will always work with the Zariski topology. 

Given a morphism $f:X \rightarrow Y$ between $k$-varieties, and a point $s:\Spec(k(s))\rightarrow Y$, we denote by $f^{-1}(s)$ the scheme-theoretic fibered product $\Spec(k(s))\times_Y X$, and call it the \textit{fiber} of $f$ in $s$. Hence, with our notation, this is not necessarily reduced.

A geometrically integral $k$-variety $X$ is a $k$-variety such that $X_{\bar{k}}$ is integral. 

A proper morphism $f:Y \rightarrow X$ between normal $k$-varieties is a \textit{cover} if the fiber $f^{-1}(\eta)$ is finite for every point $\eta$ of codimension $\leq 1$ in $X$.

When $k \subset \C$, a smooth $k$-variety $X$ is \textit{simply connected} if $X_{\C}$ is a simply connected topological space. 

Given a morphism $f:Y \rightarrow X$ between integral $k$-varieties, with $Y$ normal, we will make use of the notion of \textit{relative normalization} of $X$ in $Y$. We refer the reader to \cite[Def. 28.50.3, Tag 0BAK]{stacks-project} or \cite[Def. 4.1.24]{Liu} for its definition, and recall here the properties that are needed in this paper. Namely, we will need that the \textit{relative normalization} of $X$ in $Y$ is a finite morphism $n:\hat{X}\rightarrow X$ such that $\hat{X}$ is normal, and such that there exists a factorization $f=\phi \circ n$, where $\phi:Y \rightarrow \hat{X}$ has a geometrically integral generic fiber. The \textit{normalization} of an integral variety $X$ is the usual normalization $\hat{X}$ of $X$ \cite[Section 4.1.2]{Liu}. 

The domain $\Dom(f)$ of a rational morphism $f:Y \dashrightarrow X$ between integral $k$-varieties is the maximal open Zariski subset $U \subset Y$ such that $\restricts{f}{U}$ extends to a morphism on $U$.

Given a rational morphism $f:Y \dashrightarrow X$ between integral $k$-varieties, and a birational transformation $b: Y' \dashrightarrow Y$, we will denote with abuse of notation, when there is no risk of confusion, the morphism $f \circ b$ by $f$. We say that $f$ is \textit{well-defined} on $Y'$ if $\Dom(f \circ b)=Y'$. 

\paragraph{Ramification}

We recall that a morphism $f:Y \rightarrow X$ between $k$-varieties is \textit{unramified} (resp. \textit{étale}) in $y \in Y$ if its differential $df_y:T_yY\rightarrow T_{f(y)}X$ is injective (resp. an isomorphism). Otherwise we say that $f$ is \textit{ramified} at $y$. The set of points where $f$ is ramified has a closed subscheme structure in $Y$, and we will refer to it as the \textit{ramification locus}. The image of the ramification locus under $f$ is the \textit{diramation locus}. We recall that, by Zariski's Purity Theorem \cite[Lem. 53.20.4, Tag 0BMB]{stacks-project}, when $Y$ is normal and $X$ is smooth, the diramation locus of a finite morphism $f:Y \rightarrow X$ is a divisor. Hence, in this case, we will also refer to the diramation locus as the \textit{diramation divisor}. 
A simply connected variety $X$ does not have any geometrically integral cover of degree $>1$ which is unramified in codimension $1$.

\paragraph{Cubic hypersurfaces}

Cubic hypersurfaces are hypersurfaces in $\P_n$ defined by a cubic homogeneous polynomial. We recall the following:

\begin{theorem}[Segre '43, Manin '72]\label{Thm:unirationalcubics}
	Let $X$ be a smooth cubic hypersurface, defined over a field $k$, with a $k$-rational point. Then $X$ is unirational.
\end{theorem}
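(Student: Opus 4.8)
The plan is to prove $k$-unirationality by combining two geometric constructions. Write $X\subset\P_{n+1}$, so that $\dim X=n$; we take $n\geq 2$, which is the range in which the statement is used (for $n\leq 1$ it is vacuous or false).

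\emph{First construction: thickening a rational tangent section.} Suppose $p\in X(k)$ is such that the tangent hyperplane section $Y_p:=T_pX\cap X$ --- a cubic hypersurface of dimension $n-1$ inside $T_pX\cong\P_n$ --- is geometrically integral with a point of multiplicity exactly $2$ at $p$. (Writing a local equation of $X$ at $p$ as a sum of homogeneous parts $f_1+f_2+f_3$ with $\{f_1=0\}=T_pX$, the local equation of $Y_p$ is $f_2+f_3$, so $\{f_2=0\}$ is the tangent cone at $p$ and the condition is just $f_2\not\equiv 0$.) Projection from $p$ then realizes $Y_p$ as $k$-birational to $\P_{n-1}$, since a general line through $p$ meets $Y_p$ at $p$ with multiplicity exactly $2$ and at a single further point. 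I then introduce the incidence variety
\[
I:=\{(q,x)\in Y_p\times X \;:\; x\in T_qX\},
\]
whose first projection to $Y_p$ has fibre over $q$ the tangent section $Y_q=T_qX\cap X$. Over the generic point of $Y_p$ this fibre is again a geometrically integral cubic $(n-1)$-fold with a double point at the tautological point, hence $k(Y_p)$-rational by projection from that point; so the generic fibre of $I\to Y_p$ is $\P_{n-1}$ over $k(Y_p)$, and therefore $I$ is $k$-birational to $Y_p\times\P_{n-1}$, which is $k$-rational because $Y_p$ is. On the other hand, the second projection $I\to X$ is dominant: for a general $x\in X$ the condition $x\in T_qX$ cuts out (in the variable $q$) the first polar quadric of $x$, and inside $\P_n$ this quadric meets the $(n-1)$-dimensional $Y_p$ for dimension reasons. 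Composing, we obtain a dominant rational map from a $k$-rational variety onto $X$, so $X$ is $k$-unirational.

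\emph{Second construction: projecting from a line defined over $k$.} Suppose instead that $X$ contains a line $L$ defined over $k$ (here no rational point is needed). Projection from $L$ --- resolved, when $n\geq 3$, by first blowing up $L$ --- exhibits a variety $k$-birational to $X$ as a conic bundle over $\P_{n-1}$, the fibre over $[\Pi]$ being the conic $C_\Pi$ residual to $L$ in $\Pi\cap X$ for the $2$-planes $\Pi\supset L$. Since $X$ is smooth along $L$, the projectivised normal bundle $\P(\mathcal{N}_{L/X})$ is a projective bundle over $L\cong\P_1$, hence $k$-rational; and, because each $C_\Pi$ meets $L$ in two points, $\P(\mathcal{N}_{L/X})$ sits inside the total space of this conic bundle as a $2$-section. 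Base-changing the conic bundle along it produces a conic bundle over $\P(\mathcal{N}_{L/X})$ carrying a tautological section, which is therefore $k$-birational to $\P_1\times\P(\mathcal{N}_{L/X})$, a $k$-rational variety dominating $X$; so again $X$ is $k$-unirational.

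\emph{Assembling the pieces, and the main obstacle.} When $n\geq 3$ the first construction is available for almost every $k$-point: were $Y_p$ reducible or non-reduced it would contain a hyperplane of $T_pX\cong\P_n$, i.e.\ a codimension-$2$ linear subspace of $\P_{n+1}$ lying on $X$, which a smooth cubic of dimension $\geq 3$ does not admit (it would be singular along a positive-dimensional locus); hence $Y_p$ is geometrically integral, and it has a double point at $p$ as soon as $p$ avoids a certain proper closed subset. When $n=2$ one argues instead with the $27$ lines of $X$: if the given $k$-point lies on none of them, $Y_p$ is an integral (nodal or cuspidal) plane cubic and the first construction applies; otherwise the Galois action on the lines of $X$ through the point, or residual to it, produces a line defined over $k$, and the second construction applies. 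What remains --- and what I expect to be the delicate point --- is to show that one of these inputs is always available: a $k$-rational point in sufficiently general position, or a line over $k$. The residual special configurations (in the surface case, chiefly Eckardt points, where the tangent section degenerates into three concurrent lines permuted transitively by the Galois group, so that the above dichotomy must be refined) have to be handled individually. This essentially arithmetic step, rather than either geometric construction, is the technical core of Segre's and Manin's proofs.
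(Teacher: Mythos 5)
The paper offers no proof of this statement: it is quoted as a classical theorem of Segre and Manin and used as a black box (the relevant modern reference covering all fields and all dimensions $\geq 2$ is Koll\'ar's \emph{Unirationality of cubic hypersurfaces}). So the only question is whether your sketch would stand on its own, and it does not: you have correctly reproduced the two standard geometric constructions (the tangent-section/incidence-variety construction and the conic bundle obtained by projecting from a $k$-line), but, as you yourself say in the last paragraph, you have not proved that one of their inputs is always available. That availability is not a ``residual special configuration'' to be cleaned up at the end --- it is the theorem. The hypothesis hands you a \emph{single} $k$-point $p$, over an arbitrary field $k$ (possibly finite, possibly with $X(k)=\{p\}$), and your first construction needs $Y_p=T_pX\cap X$ to be geometrically integral \emph{and} to have multiplicity exactly $2$ at $p$. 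For $n\geq 3$ you correctly rule out reducibility, but the condition $f_2\not\equiv 0$ on $T_pX$ can fail at the given point ($Y_p$ a cone with vertex $p$), and saying the bad locus is a proper closed subset is useless when you cannot exhibit a second $k$-point outside it; producing new $k$-points from one on a cubic generally yields degree-$2$ cycles, not rational points, so there is no cheap bootstrap. For $n=2$ the dichotomy ``$p$ on no line'' versus ``Galois produces a $k$-line among the lines through $p$'' is exactly where Segre's and Manin's case analysis lives (Eckardt points, transitive Galois orbits of the lines in $Y_p$, small fields), and you explicitly defer it.

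Two smaller points. In the first construction you also need the component of $I$ that is birational to $Y_p\times\P_{n-1}$ (the closure of the generic fibre) to be the one dominating $X$; your dimension count shows every fibre of $I\to X$ is nonempty, but a priori those fibres could sit inside components of $I$ lying over proper closed subsets of $Y_p$, so a short extra argument is required. And the statement as used in the paper (Theorem \ref{cubiche}) only needs $K$-unirationality over a number field, where one can at least use the infinitude of $k$ to move points along rational curves once a first good point is found --- but even there the reduction from ``one arbitrary $k$-point'' to ``one good $k$-point or a $k$-line'' must be written down. As it stands, your text is an accurate road map of the classical proof with its hardest segment marked ``to be handled individually,'' which is a genuine gap rather than a proof.
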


\paragraph{Hilbert Property}

For a more detailed exposition of the basic theory of the Hilbert Property and thin sets we refer the interested reader to \cite[Ch. 3]{Serre}. We limit ourselves here to recalling the most common definition (which is not the one given in the introduction), and some recent results.

\begin{definition}\label{Def:thin}
	Let $X$ be a geometrically integral variety, defined over a field $k$. A \textit{thin} subset $S \subset X(k)$ is any set contained in a union $D(k)\cup \bigcup_{i=1,\dots,n} \pi_i(E_i(k))$, where $D \subsetneq X$ is a subvariety, and $\pi_i:E_i \rightarrow X$ are generically finite morphisms\footnote{When $X$ is normal, one can substitute here ``generically finite morphisms" with ``covers" to get an equivalent definition.} of degree $>1$, and the $E_i$'s are irreducible.
\end{definition}

\begin{remark}\label{HPthin}
	A geometrically integral $k$-variety $X$ has the Hilbert Property if and only if $X(k)$ is not thin.
\end{remark}

Throughout this paper, we will use the abbreviation HP for Hilbert Property.

\begin{theorem}[Bary-Soroker, Fehm, Petersen]\label{fibrationHP}
	Let $f:X \rightarrow S$ be a morphism of $K$-varieties. Suppose that there exists a non-thin subset $A \subset S(K)$ such that, for each $s \in A$, $f^{-1}(s)$ has the HP. Then $X/K$ has the HP.
\end{theorem}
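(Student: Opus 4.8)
The plan is to argue by contradiction, using Remark~\ref{HPthin}. Suppose $X(K)$ is thin, so that $X(K)\subseteq D(K)\cup\bigcup_{i=1}^{n}\pi_i(E_i(K))$ for a proper closed subvariety $D\subsetneq X$ and generically finite morphisms $\pi_i\colon E_i\to X$ of degree $>1$ with $E_i$ irreducible (Definition~\ref{Def:thin}); I want to conclude that $A$ is thin, which is a contradiction. Two reductions are automatic from the hypotheses. First, $f$ must be dominant: otherwise $A\subseteq\overline{f(X)}(K)$ would be contained in a proper closed subvariety of $S$. Second, the generic fibre of $f$ must be geometrically integral over $F:=K(S)$: otherwise, via the constant field extension of $K(X)/F$ --- which corresponds to a cover of $S$ of degree $>1$ --- the set of $s\in S(K)$ with $X_s:=f^{-1}(s)$ geometrically integral would be thin, whereas it contains $A$.

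Next I restrict the putative thin cover to the fibres $X_s$ with $s\in A$. If $x\in X_s(K)\subseteq X(K)$, then either $x\in(D\cap X_s)(K)$, or $x=\pi_i(e)$ with $e\in E_i(K)$ lying over $s$, in which case $x\in\pi_{i,s}\bigl((E_i\times_X X_s)(K)\bigr)$ for the induced map $\pi_{i,s}\colon E_i\times_X X_s\to X_s$; hence $X_s(K)\subseteq (D\cap X_s)(K)\cup\bigcup_{i}\pi_{i,s}\bigl((E_i\times_X X_s)(K)\bigr)$. Decomposing each $E_i\times_X X_s$ into irreducible components, a component not dominating $X_s$ contributes points of a proper closed subvariety, and a component dominating $X_s$ with degree $>1$ contributes a thin subset; therefore, if $X_s\not\subseteq D$ and no component of any $E_i\times_X X_s$ maps birationally onto $X_s$, then $X_s(K)$ is thin, contradicting the fact that $X_s$ has the HP. So for every $s\in A$ either $X_s\subseteq D$, or some $\pi_i$ admits a rational section over $X_s$ (equivalently, $E_i\times_X X_s$ has a component mapping birationally onto $X_s$). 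Hence $A\subseteq\{s\in S(K):X_s\subseteq D\}\cup\bigcup_i\mathcal B_i$, where $\mathcal B_i:=\{s\in S(K):\pi_i\text{ has a rational section over }X_s\}$. The first set is thin, since $X\setminus D$ is dense and $f$ dominant, so $f(X\setminus D)$ contains a dense open $U\subseteq S$, while $X_s\subseteq D$ forces $s\notin U$.

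It remains to show each $\mathcal B_i$ is thin, which is the heart of the argument. I would spread out over a dense open $V\subseteq S$ chosen so that $f^{-1}(V)\to V$ is smooth with geometrically integral fibres, and so that, over a dense open $\mathcal U\subseteq f^{-1}(V)$, the map $E_i\times_X\mathcal U\to\mathcal U$ is finite étale of degree $d:=\deg\pi_i\ge 2$ with geometric monodromy group $M\le\mathfrak S_d$ (acting on the $d$ sheets) independent of the fibre $\mathcal U_s$, $s\in V$ --- this last point uses generic local triviality of the family, equivalently deformation invariance of the geometric fundamental group in characteristic $0$. For $s\in V(K)$, the morphism $\pi_i$ has a rational section over $X_s$ exactly when $E_i\times_X\mathcal U_s\to\mathcal U_s$ has a degree-one connected component, i.e.\ when its arithmetic monodromy group --- which contains $M$ --- fixes one of the $d$ sheets. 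If $M$ fixes no sheet, this can happen only on a proper closed subvariety of $S$, so $\mathcal B_i$ is thin. If $M$ fixes a sheet, then, $M$ being normal in the transitive arithmetic monodromy of the generic fibre cover $(E_i)_\eta\to X_\eta$, all $M$-orbits are singletons, so $(E_i)_\eta\to X_\eta$ is geometrically split; as $(E_i)_\eta$ is irreducible over $F$ and $X_\eta$ geometrically integral, this forces $(E_i)_\eta\cong X_\eta\times_F\Spec F'$ with $F'/F$ a field extension of degree $d$, hence $E_i\times_X\mathcal U\cong \mathcal U\times_V S'$ for a cover $S'\to S$ of degree $d$. Then $E_i\times_X\mathcal U_s\cong\mathcal U_s\times_K(S'\times_S s)$ has a degree-one component precisely when $S'\times_S s$ has a $K$-point, so $\mathcal B_i$ is contained, up to a proper closed subvariety, in the image of $S'(K)$ in $S(K)$, which is thin because $\deg(S'/S)\ge 2$. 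In both cases $\mathcal B_i$ is thin, and so is $A$, giving the contradiction.

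The main obstacle is this last step: arranging the spreading-out so that the geometric monodromy of $E_i\times_X\mathcal U\to\mathcal U$ is genuinely constant along the family, and then the dichotomy according to whether the cover $(E_i)_\eta\to X_\eta$ acquires a geometric section. All the rest is bookkeeping about how thin covers restrict to fibres, and the Hilbert Property of the $X_s$ enters only once, to rule out the generic behaviour on each fibre.
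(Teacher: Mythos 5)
The paper does not actually prove this statement: it is quoted from Bary-Soroker, Fehm and Petersen, and the ``proof'' in the text is just the citation \cite[Theorem 1.1]{fibrazioniHP}. Your argument is therefore necessarily independent of the paper; what it does is reconstruct, essentially correctly, a proof of the cited result in characteristic $0$. The preliminary reductions (dominance of $f$, geometric integrality of the generic fibre via the constant field extension, restriction of a putative thin cover to the fibres, disposal of the components of $E_i\times_X X_s$ not dominating $X_s$ and of those of degree $>1$) are sound, and you correctly isolate the heart of the matter: for a cover $\pi_i\colon E_i\to X$ of degree $d\ge 2$, the set $\mathcal B_i$ of $s$ over which $\pi_i$ acquires a rational section must be shown to be thin. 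Your dichotomy --- either the geometric monodromy $M$ of the fibre covers fixes no sheet, so $\mathcal B_i$ avoids a dense open of $S$, or it fixes one, in which case normality of $M$ inside the transitive arithmetic monodromy forces $M$ to be trivial, the cover to be pulled back from a degree-$d$ cover $S'\to S$, and $\mathcal B_i$ to land in the thin set $\mathrm{im}(S'(K)\to S(K))$ --- is exactly the right mechanism, and the group theory (a normal subgroup of a transitive permutation group having a fixed point acts trivially) is correct. Two remarks. First, your argument leans on topology at the one genuinely delicate point: generic local triviality of $\mathcal U\to V$, the homotopy exact sequence to see that the fibre monodromy is a normal subgroup independent of $s$, and its identification with the monodromy of the geometric generic fibre. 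This is legitimate over a number field but is where the details must be supplied; the argument of \cite{fibrazioniHP} is purely algebraic (working with the algebraic closure of $K(S)$ in the Galois closure of $K(E_i)/K(X)$) and hence valid over arbitrary fields, so it buys generality at the price of the geometric transparency of your version. Second, a small bookkeeping point: before decomposing $E_i\times_X X_s$ into components you should discard the $s$ for which $X_s$ lies in the locus where $\pi_i$ fails to be finite, so that every component dominating $X_s$ is genuinely generically finite over it; this is a closed condition avoiding a dense open of $S$ and folds into your treatment of $D$.
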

\begin{proof}
	See \cite[Theorem 1.1]{fibrazioniHP}.
\end{proof}

\begin{definition}\label{ellipticfibration}
	Let $\Eps$ be a normal projective algebraic $K$-surface. We say that a morphism $\pi:\Eps \rightarrow \P_1$ is an \textit{elliptic fibration} if its generic fiber is a smooth, geometrically connected, genus $1$ curve.
\end{definition}

The following theorem is included just for completeness. In fact, Theorem \ref{Thm:teoremone}, which we are going to prove in Section \ref{surfaces}, is a stronger version of it. 

\begin{theorem}\label{mydoublefibration}
	Let $K$ be a number field, and $E$ be a projective smooth simply connected algebraic $K$-surface, endowed with two elliptic fibrations $\pi_i:E \rightarrow \P_1/K, \ i=1,2$, such that $\pi_1\times \pi_2:E \rightarrow \P_1 \times \P_1$ is a finite morphism. Suppose moreover that the following hold:
	\begin{itemize}
		\item[(a)] The $K$-rational points $E(K)$ are Zariski-dense in $E$;
		\item[(b)] Let $\eta_1 \cong \Spec K(\lambda)$ be the generic point of the codomain of $\pi_1$. All the diramation points (i.e. the images of the ramification points) of the morphism $\restricts{\pi_2}{\pi_1^{-1}(\eta_1)}$ are non-constant in $\lambda$, and the same holds upon inverting $\pi_1$ and $\pi_2$.
	\end{itemize}
	Then the surface $E/K$ has the Hilbert Property.
\end{theorem}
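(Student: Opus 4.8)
The plan is to prove that $E(K)$ is not thin, which by Remark~\ref{HPthin} is equivalent to the Hilbert Property. So suppose, for contradiction, that $E(K)\subseteq D(K)\cup\bigcup_{j=1}^{m}\psi_j(Y_j(K))$, where $D\subsetneq E$ is closed and the $\psi_j\colon Y_j\to E$ are covers of degree $d_j>1$ (legitimate since $E$ is normal); by the usual reductions, absorbing the non--geometrically-integral pieces into $D$, I may assume each $Y_j$ is normal and geometrically integral. Since $E$ is smooth and simply connected, every $\psi_j$ must ramify in codimension one, so by Zariski purity its branch locus $B_j\subset E$ is a non-empty divisor.

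The core of the argument is to restrict each $\psi_j$ to the generic fibres of $\pi_1$ and of $\pi_2$. Put $C_i:=\pi_i^{-1}(\eta_i)$, a smooth geometrically integral genus-$1$ curve over $K(\lambda_i)$. Because $Y_j$ is geometrically integral, $\mathrm{Gal}(\overline{K(\lambda_i)}/K(\lambda_i))$ acts transitively on the geometric irreducible components of $\psi_j^{-1}(C_i)$, which are therefore mutually isomorphic as covers of $C_i\otimes\overline{K(\lambda_i)}$; since a curve covering a genus-$1$ curve has genus $1$ exactly when the map is étale, the cover $\psi_j^{-1}(C_i)\to C_i$ is either étale (every component of genus $1$) or ramified with every component of genus $\geq 2$. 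Now $\psi_j^{-1}(C_i)\to C_i$ is étale precisely when $B_j$ is supported on finitely many fibres of $\pi_i$, and, $B_j$ being a non-empty divisor while $\pi_1\times\pi_2$ is finite (so no curve of $E$ lies in a fibre of $\pi_1$ and in a fibre of $\pi_2$ at once), $B_j$ cannot be vertical for both fibrations. This yields a partition $\{1,\dots,m\}=\mathcal R\sqcup\mathcal V_1\sqcup\mathcal V_2$: for $j\in\mathcal V_i$ the restriction $\psi_j^{-1}(C_i)\to C_i$ is étale, and then $B_j$ dominates the other base, so $\psi_j^{-1}(C_{3-i})\to C_{3-i}$ is ramified; for $j\in\mathcal R$ both restrictions are ramified. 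Spreading out: for all but finitely many $\lambda$ the fibre $F_\lambda:=\pi_1^{-1}(\lambda)$ is smooth and, whenever $\psi_j^{-1}(C_1)\to C_1$ is ramified, all components of $\psi_j^{-1}(F_\lambda)$ have genus $\geq 2$, hence only finitely many $K$-points by Faltings' theorem; so $\psi_j(Y_j(K))\cap F_\lambda(K)$ is finite for $j\in\mathcal R\cup\mathcal V_2$ and general $\lambda$, and symmetrically $\psi_j(Y_j(K))\cap G_\mu(K)$ is finite for $j\in\mathcal R\cup\mathcal V_1$ and general $\mu$, where $G_\mu:=\pi_2^{-1}(\mu)$.

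It remains to produce a point of $E(K)$ outside $D(K)\cup\bigcup_j\psi_j(Y_j(K))$. Using hypothesis~(a) (Zariski-density of $E(K)$) one fixes a general $\mu_0\in\P_1(K)$ — avoiding the finitely many bad values above and those coming from $D$ — for which $G_{\mu_0}(K)$ is infinite; it is here that hypothesis~(b) enters, as an anti-isotriviality condition on the pair $(\pi_1,\pi_2)$ that excludes degenerate cases such as $E$ being birational to a product $E_0\times\P_1$ with $\pi_2$ factoring through the first projection (for which $E(K)$ would be thin), and thus lets density be propagated along $\pi_2$. On such a fibre,
\[ G_{\mu_0}(K)\ \subseteq\ (\text{a finite set})\ \cup\ \bigcup_{j\in\mathcal V_2}\bigl(\psi_j(Y_j(K))\cap G_{\mu_0}(K)\bigr), \]
and, since $\psi_j^{-1}(C_2)\to C_2$ is étale for $j\in\mathcal V_2$, each set on the right is a finite union of cosets of proper finite-index subgroups of $G_{\mu_0}(K)$ (images under isogenies of $G_{\mu_0}$). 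The final step is to rule this out: a point $P$ in such a coset lies on $F_{\pi_1(P)}$, and because $B_j$ is vertical for $\pi_2$ while the branch points of $\pi_2|_{C_1}$ vary with $\lambda$ (hypothesis~(b)), the values $\pi_1(P)$ that occur cannot be confined to a finite set, so the finiteness of $\psi_j(Y_j(K))\cap F_\lambda(K)$ for general $\lambda$ forces these cosets, together with the finite set, to miss cofinitely many points of $G_{\mu_0}(K)$ — the desired contradiction. Making this last interplay between the two fibrations precise — equivalently, showing directly that a suitably general pair $(\lambda_0,\mu_0)$ carries a $K$-rational point of $E$ avoiding every $\psi_j(Y_j(K))$ and $D$ — is the step I expect to be the main obstacle; the rest is bookkeeping with thin sets and fibrewise applications of Faltings' theorem.
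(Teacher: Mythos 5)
Your setup and first two steps match the strategy the paper uses for the stronger Theorem~\ref{Thm:teoremone} (of which this statement is the special case $n=2$, $\Fix(\pi_1,\pi_2)=\emptyset$): reduce to normal geometrically integral covers, note via simple connectedness and purity that each branch divisor $B_j$ is non-empty, observe that finiteness of $\pi_1\times\pi_2$ forbids $B_j$ from being vertical for both fibrations, and apply Faltings on fibres where the restricted cover is ramified and weak Mordell--Weil where it is \'etale. The problem is the final step, which you yourself flag as the main obstacle: it is genuinely wrong as sketched. You try to reach a contradiction \emph{within a single fibre} $G_{\mu_0}$ by arguing that the finite-index cosets coming from the $j\in\mathcal V_2$ must ``miss cofinitely many points'' of $G_{\mu_0}(K)$. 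There is no reason for this: those cosets may perfectly well cover all of $G_{\mu_0}(K)$ (indeed a single one of them can be the whole group, e.g.\ when a geometric component of the \'etale restriction $\psi_j^{-1}(G_{\mu_0})\to G_{\mu_0}$ has degree one), and the auxiliary argument you attach about $\pi_1(P)$ ``not being confined to a finite set'' does not produce a contradiction. The hypothesis~(b) also plays no such role as ``propagating density along $\pi_2$''; infinitude of $G_{\mu_0}(K)$ for suitable $\mu_0$ comes from density of $E(K)$ together with a specialization statement (Lemma~\ref{positivegenericrank}), and in fact (b) is superfluous once $E$ is simply connected and $\pi_1\times\pi_2$ is finite --- which is precisely why Theorem~\ref{Thm:teoremone} is stronger.

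The two missing ingredients are the following. First, the coset lemma (Lemma~\ref{lemmagruppi}, i.e.\ Lemma 3.2 of Corvaja--Zannier): if finitely many finite-index cosets cover a finitely generated abelian group of positive rank up to a finite set, they cover it entirely. Applied to $G_{\mu_0}(K)$ this does not give a contradiction; it gives the \emph{positive} conclusion $G_{\mu_0}(K)\subseteq\bigcup_{j\in\mathcal V_2}\psi_j(Y_j(K))$, hence, letting $\mu_0$ range over a dense set of good values, $E(K)\subseteq\bigcup_{j\in\mathcal V_2}\psi_j(Y_j(K))\cup Z(K)$ for some proper closed $Z$. Second, one must then \emph{iterate with the other fibration}: every $j\in\mathcal V_2$ has branch divisor vertical for $\pi_2$, hence (by finiteness of $\pi_1\times\pi_2$) horizontal for $\pi_1$, so the restriction of $\psi_j$ to a general fibre $F_{\lambda_0}$ of $\pi_1$ has all components of genus $\geq 2$ and finitely many $K$-points by Faltings. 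Choosing $\lambda_0$ so that $F_{\lambda_0}(K)$ is infinite (Lemma~\ref{positivegenericrank} again) and $F_{\lambda_0}\not\subseteq Z$ yields the contradiction. Without the coset lemma and this two-stage elimination, the argument does not close.
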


\begin{proof}
		See \cite[Theorem 1.4]{myarticle1}.
\end{proof}

\section{Hilbert Property for cubic hypersurfaces}\label{cubics}

The case $n=2$ of the following theorem is a consequence of a result of Swinnerton-Dyer \cite{cubiche-swinndyer}. However, we still present an independent proof here, as it is particularly simpler than Swinnerton-Dyer's proof.

\begin{theorem}\label{cubiche}
	Let $X\subset \P_n/K$, $n \geq 3$ be a smooth cubic hypersurface, with a $K$-rational point. Then $X$ has the Hilbert Property.
\end{theorem}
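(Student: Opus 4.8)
The plan is to reduce the Hilbert Property for a smooth cubic hypersurface $X \subset \P_n$ ($n \geq 3$) with a $K$-rational point to the case $n = 3$, i.e.\ to smooth cubic surfaces, and then handle the cubic surface case by fibering it in plane cubic (genus $1$) curves. For the reduction: given a $K$-rational point $p \in X(K)$, consider the family of $\P_3$'s through $p$ (or through a suitable $K$-rational linear subspace); a generic such $\P_3$ cuts $X$ in a smooth cubic surface, and these surfaces sweep out $X$. Applying Theorem \ref{fibrationHP} to the resulting (rational) map $X \dashrightarrow G$ to the relevant Grassmannian-type parameter space, it suffices to show that a non-thin set of these linear sections are smooth cubic surfaces having themselves the HP, together with an argument that the union of the fibers over a non-thin subset of the base still ``sees'' a non-thin subset of $X(K)$ — here one must be slightly careful, since Theorem \ref{fibrationHP} is stated for morphisms, so I would first blow up or pass to an open locus where the linear-section map is an honest morphism, check the parameter space has the HP (it is an open subset of a projective space, hence has the HP), and verify that $K$-points of $X$ lying in the smooth members are Zariski-dense.

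For the base case of a smooth cubic surface $S/K$ with a $K$-point, the idea is to produce two (in fact many) elliptic fibrations and invoke Theorem \ref{Thm:teoremone} — or, more elementarily, to directly use Theorem \ref{fibrationHP}. A smooth cubic surface with a $K$-rational point $q$ not lying on any line of $S$: projecting from the tangent plane $T_qS$, or rather taking the pencil of hyperplane sections through $q$, gives a fibration of (the blow-up of) $S$ in plane cubic curves passing through $q$, hence genus $1$ curves with a rational point, i.e.\ elliptic curves over the function field $K(t)$ of the base $\P_1$. Each such curve, being an elliptic curve with a rational point and positive rank over $K$ for ``most'' $t$ (which one gets because $S(K)$ is Zariski-dense — this density itself follows from unirationality via Theorem \ref{Thm:unirationalcubics}), has the HP. One then needs a non-thin set of $t \in \P_1(K)$ for which the fiber is a smooth elliptic curve of positive rank; a specialization/Silverman-type argument, or the explicit multisection coming from a second pencil, supplies this, and Theorem \ref{fibrationHP} concludes.

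The main obstacle I expect is the verification that the relevant subset of the base is genuinely non-thin, not merely Zariski-dense: for Theorem \ref{fibrationHP} one needs $A \subset S(K)$ (resp.\ in the parameter space) non-thin, and showing that the values of $t$ giving positive-rank fibers avoid being contained in a thin set requires more than an existence/density statement — one typically argues that if those $t$ were thin, then pulling back along the fibration would force $S(K)$ itself into a thin set, contradicting the Hilbert Property one is in the middle of establishing, so the argument must be organized to avoid circularity (e.g.\ by using that a single explicit auxiliary curve or section already forces infinitely many good fibers in a ``non-thin'' pattern, or by appealing to Theorem \ref{Thm:teoremone} with the simple-connectedness of $S$ minus the lines). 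A secondary technical point is the passage from rational maps to morphisms in the dimension reduction, handled by standard resolution of the indeterminacy locus of the linear-projection map and noting the HP is a birational invariant for smooth projective varieties.
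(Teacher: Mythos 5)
Your reduction of the general case to $n=3$ by fibering $X$ into linear sections through a $K$-point and invoking Theorem \ref{fibrationHP} is essentially the paper's argument (the paper does it one hyperplane at a time, inducting on $n$ with a pencil of hyperplane sections through a point $P$, which avoids your Grassmannian bookkeeping since a pencil already gives an honest morphism after one blow-up). The problem is the base case. Your primary plan there --- fibering the cubic surface in plane cubics through $q$ and applying Theorem \ref{fibrationHP} because ``each such curve, being an elliptic curve with a rational point and positive rank\dots has the HP'' --- fails at exactly that claim: an elliptic curve never has the Hilbert Property. If $E/K$ has positive rank, then $E(K)$ is infinite but thin, since $E(K)/2E(K)$ is finite and hence $E(K)$ is covered by finitely many translates of the image of the degree-$4$ map $[2]$. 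So Theorem \ref{fibrationHP} can never be applied to a fibration whose fibers have genus $1$, and no amount of care about the non-thinness of the set of parameters $t$ (the ``main obstacle'' you identify) repairs this: the obstruction sits in the fibers, not in the base.

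What the paper actually does for $n=3$ is a direct contradiction argument in which the elliptic fibration is chosen \emph{after} the putative covers $\phi_i:Y_i\rightarrow X$ are given. Since $X$ is simply connected (Lefschetz), each $\phi_i$ has a non-empty branch divisor $R_i$; one then picks the pencil of plane sections so that its base points avoid $\bigcup_i R_i$ and so that the pencil map is non-constant on every irreducible component of every $R_i$. Simple connectedness again forces $\pi\circ\phi_i$ to have geometrically integral generic fiber, and the transversality of $R_i$ to the fibration forces, via Riemann--Hurwitz, the curves $(\pi\circ\phi_i)^{-1}(t)$ to have genus $>1$ for generic $t$; Faltings' theorem then makes $\bigcup_i\phi_i(Y_i(K))$ meet a good fiber $E_P$ in only finitely many points, contradicting Lemma \ref{positivegenericrank}, which supplies a fiber with infinitely many $K$-points. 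Your parenthetical fallback --- invoking Theorem \ref{Thm:teoremone} with two pencils of plane sections --- can in fact be made to work (for two generically chosen axes the fixed locus of the two fibrations is empty, and the blown-up cubic surface is simply connected, being rational), but as written it is only a gesture, it is not the route the paper takes, and the paper could not take it in Section 3 since Theorem \ref{Thm:teoremone} is only proved in Section 4. As it stands, the proposal is missing the one idea that makes the base case go through: converting simple connectedness into ramification transverse to the fibration, hence into genus $>1$ on the pulled-back fibers, so that Faltings can be played against the positive generic rank of the elliptic fibration.
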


In this section our base field will always be a number field $K$.

We need the following lemma, of which an explicitly computable version can be found in \cite{RonaldvanLuijk2012}.

\begin{lemma}\label{positivegenericrank}
	Let $\pi:\Eps \rightarrow \P_1$ be an elliptic fibration, defined over a number field $K$. Then, there exists an open Zariski subset $U_{\pi} \subset \Eps$ such that, for any $P \in U_{\pi}(K)$, $\pi^{-1}(\pi(P))$ is smooth and $\#{\pi^{-1}(\pi(P))(K)}=\infty$. 
\end{lemma}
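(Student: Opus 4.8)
The plan is to exhibit a dense open subset of $\Eps$ whose $K$-points are \emph{non-torsion} on their own fibre, so that a single such point $P$ already generates an infinite subgroup of rational points on the (smooth, elliptic) curve $\pi^{-1}(\pi(P))$. The only structural subtlety is that the generic fibre of $\pi$ need not possess a $K$-rational point, so there is a priori no relative group law on $\Eps$ with which to speak of torsion; I would circumvent this by passing to the relative Jacobian and using a multisection to build a comparison map over $K$.

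Concretely, let $V \subset \P_1$ be the open locus over which $\pi$ is smooth and put $\Eps^{\mathrm{sm}} := \pi^{-1}(V)$, a smooth surface all of whose fibres are smooth genus $1$ curves. Let $\mathcal{J} := \mathrm{Pic}^0_{\Eps^{\mathrm{sm}}/V} \to V$ be the relative Jacobian, a family of elliptic curves over $K$ equipped with a zero section. Since $\Eps$ is projective, a general hyperplane section supplies a horizontal curve $D \subset \Eps$, i.e.\ a multisection of $\pi$ of some degree $d \geq 1$; write $\mathfrak{d}_t := D \cap \Eps_t$ for the induced degree-$d$ divisor on the fibre over $t$. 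The multisection is precisely what lets me define, without assuming a section of $\pi$, the $K$-morphism
\[
	\psi \colon \Eps^{\mathrm{sm}} \longrightarrow \mathcal{J}, \qquad P \longmapsto \big[\, d\,P - \mathfrak{d}_{\pi(P)} \,\big] \in \mathrm{Pic}^0(\Eps_{\pi(P)}).
\]
The key computation is fibrewise: choosing a geometric base point identifies $\Eps_t \cong J_t$ by Abel--Jacobi, and under this identification $\psi|_{\Eps_t}$ is multiplication-by-$d$ followed by a translation, hence a finite surjective morphism of degree $d^2$. Therefore, for \emph{any} curve $Z \subset \mathcal{J}$, the preimage $\psi^{-1}(Z)$ meets every fibre of $\pi$ in finitely many points, and so is itself a curve --- a proper closed subset of the surface $\Eps^{\mathrm{sm}}$.

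I would then invoke Merel's uniform boundedness theorem: there is an integer $N = N(K)$ bounding the order of every torsion point of every elliptic curve over $K$. Consequently any $K$-rational torsion point of any fibre $J_t$ lies on the finite union $\mathcal{J}_{\le N} := \bigcup_{m \le N} \mathcal{J}[m]$, which is a curve in $\mathcal{J}$. By the previous paragraph $T := \psi^{-1}(\mathcal{J}_{\le N})$ is a proper closed subset of $\Eps^{\mathrm{sm}}$, so
\[
	U_{\pi} := \Eps^{\mathrm{sm}} \setminus T
\]
is a non-empty Zariski open subset of $\Eps$. For any $P \in U_{\pi}(K)$, set $t := \pi(P)$: the fibre $\Eps_t$ is smooth, and $\psi(P) \in J_t(K)$ is non-torsion by construction, so $J_t(K)$ is infinite. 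As $\Eps_t$ carries the $K$-point $P$, Abel--Jacobi yields a $K$-isomorphism $\Eps_t \cong J_t$, whence $\#\Eps_t(K) = \infty$, which is exactly the assertion.

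The heart of the argument, and the only point where a deep input enters, is the truncation of the a priori countable family $\bigcup_m \mathcal{J}[m]$ --- which is Zariski dense in $\mathcal{J}$, so that its preimage would be dense in $\Eps$ --- down to the finite union $\mathcal{J}_{\le N}$; this is precisely what Merel's theorem provides, and it is what makes $U_\pi$ genuinely Zariski open rather than merely the complement of a countable set. I note that when $\pi$ does admit a section defined over $K$ one may dispense with the Jacobian and the multisection: take $d = 1$, use the section as origin, and observe directly that a non-torsion $K$-point $P$ generates the infinite subgroup $\langle P \rangle \subset \Eps_t(K)$.
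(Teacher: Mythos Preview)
The paper does not actually prove this lemma: it states the result and defers to the citation \cite{RonaldvanLuijk2012} for ``an explicitly computable version'', with no argument given in the text. So there is no in-paper proof to compare against directly.

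Your argument is correct. Two small technical points worth tightening: first, shrink $V$ further so that the hyperplane-section multisection $D$ is finite flat of constant degree $d$ over $V$, which guarantees that $\mathfrak{d}$ is a relative effective Cartier divisor and hence that $\psi$ is an honest $K$-morphism to $\mathcal{J}$; second, the claim that $T = \psi^{-1}(\mathcal{J}_{\le N})$ is a proper closed subset follows because $\psi$ is quasi-finite over $V$ (fibrewise of degree $d^2$), so the preimage of the one-dimensional closed subscheme $\mathcal{J}_{\le N} = \bigcup_{m \le N}\mathcal{J}[m]$ remains one-dimensional in the surface $\Eps^{\mathrm{sm}}$. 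With these in place the conclusion goes through exactly as you wrote.

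As for the comparison with the cited approach: results of the van Luijk type typically proceed via heights and a specialization argument in the spirit of Silverman's specialization theorem, which yields an \emph{effective} description of the bad locus. Your route through Merel's uniform boundedness theorem is genuinely different --- it trades effectivity for structural transparency, identifying the obstruction as the preimage of a fixed torsion subscheme and making it immediate that the complement is Zariski open rather than merely the complement of a countable set. Your final paragraph correctly isolates this as the crux: without Merel one would only get a thin-set statement, not an open-set one.
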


\begin{proof}[Proof of Theorem \ref{cubiche}]
	
	We note that $X$ is $K$-unirational by Theorem \ref{Thm:unirationalcubics}, in particular it has Zariski-dense $K$-rational points.
	
	We prove the result by induction on $n$.
	
	\emph{Case} $n=3$.

	We assume by contradiction that $X$ does not have the HP. Then there exist irreducible covers $\phi_i: Y_i \rightarrow X, \ i=1, \dots, m$ of degree $\deg \phi_i >1$ and a divisor $D \subset X$ such that $X(K) \subset \cup_i \phi_i(Y_i(K)) \cup D(K)$. We may assume, without loss of generality, that the $Y_i$'s are normal and geometrically integral\footnote{In fact, possibly by enlarging $D$, one can substitute $Y_i$ with the normalization of $X$ in $Y_i$. A normal variety that is not geometrically integral over the base field $K$ does not have any $K$-rational points.}, and that the $\phi_i$'s are finite morphisms. Let us denote now by $R_i$ the diramation divisor of $\phi_i$. By Lefschetz' hyperplane Theorem, $X$ is simply connected, hence we know that the $R_i$'s are nonempty for each $i=1, \dots, m$. 
	
	Let us denote by $\A_4^*$ the dual affine space of $\A_4$, minus the origin. To each element $h \in \A_4^*$ corresponds a hyperplane $H(h)\subset \P_3$ in a canonical way\footnote{Namely, if $\lambda \in \A_4^*$, the associated hyperplane is $\{\mathbf{x} \in \P_3 \ | \ \lambda(\mathbf{x})=0\}$.}. Let $(h_1,h_2) \in \A_4^*(K) \times \A_4^*(K)$ be such that 
	\begin{enumerate}
		\item $H(h_1) \cap H(h_2) \cap X$ is (a scheme consisting of) three distinct points (and hence, as a direct consequence, $H(h_1) \cap H(h_2)$ is a line), and it is disjoint from the union of the $R_i$'s;
		\item $H(h_1) \cap X$ and $H(h_2) \cap X$ are smooth curves;
		\item The morphism $[h_1:h_2]:X \setminus H(h_1) \cap H(h_2) \rightarrow \P_1$ is non-constant on each of the irreducible components of the $R_i$'s.
	\end{enumerate}
	We note that, since all conditions are open and non-empty, such a couple $(h_1,h_2)$ always exists.

	Let $\{P_1,P_2,P_3\}$ be the intersection $H(h_1) \cap H(h_2) \cap X$, and let $\pi:  X \setminus \{P_1,P_2,P_3\}  \rightarrow \P_1$ be the following morphism:
	\begin{equation*}
		P \longmapsto [h_1(P):h_2(P)].
	\end{equation*}

	The map $\pi$ extends naturally to a morphism $\hat{\pi}:\hat{X} \rightarrow \P_1$, where $\hat{X}=\Blowup_{P_1+P_2+P_3}X$ denotes the blowup of $X$ in the (smooth) subscheme $P_1+P_2+P_3 \subset X$. We note that, since $X$ is a cubic surface, the morphism $\hat{\pi}$ is an elliptic fibration.

	We claim now that the morphisms $\pi \circ \phi_i:Y_i \setminus \phi_i^{-1}(\{P_1,P_2,P_3\}) \rightarrow \P_1$ have geometrically integral generic fiber for each $i=1, \dots, m$. In fact, let us assume by contradiction that there existed an $i \in \{1, \dots, m\}$ such that this is not true.
	Let
	\begin{equation}\label{Eq:factorization1}
	\pi \circ \phi_i : Y_i \setminus \phi_i^{-1}(\{P_1,P_2,P_3\}) \xrightarrow{\pi'} C \xrightarrow{r} \P_1
	\end{equation}
	be the relative normalization factorization of $\restricts{\pi \circ \phi_i}{Y_i \setminus \phi_i^{-1}(\{P_1,P_2,P_3\})}$. We would have that $\deg r>1$.
	The factorization \ref{Eq:factorization1} yields a morphism $\phi'_i:Y_i \setminus \phi_i^{-1}(\{P_1,P_2,P_3\}) \rightarrow X \setminus \{P_1,P_2,P_3\}\times_{\P_1}C$, and a factorization:
	\[
	\phi_i: Y_i \setminus \phi_i^{-1}(\{P_1,P_2,P_3\}) \xrightarrow{\phi'_i} \widehat{X \setminus \{P_1,P_2,P_3\}\times_{\P_1}C} \xrightarrow{\alpha}  X \setminus \{P_1,P_2,P_3\},
	\] 
	where $\widehat{X \setminus \{P_1,P_2,P_3\}\times_{\P_1}C}$ denotes the normalization of $X \setminus \{P_1,P_2,P_3\}\times_{\P_1}C$.
	
	Hence the diramation of $\phi_i$ would contain the diramation of $\alpha$, which would be nonempty if $\deg r>1$ (since $X \setminus \{P_1,P_2,P_3\}$ is simply connected) and contained in a finite union of fibers of $\pi$. This contradicts our choice of $(h_1,h_2)$.

	Let us denote now by $\hat{Y_i}$ the desingularization of $Y'_i=Y_i \times_X \hat{X}$, and by $\psi_i:\hat{Y_i}\rightarrow \P_1$  the composition of the desingularization morphism $\hat{Y_i} \rightarrow Y'_i$, the projection $Y'_i \rightarrow \hat{X}$ and the map $\hat{\pi}:\hat{X} \rightarrow \P_1$. By the Theorem of generic smoothness \cite[Corollary 10.7]{hartshorne} we know that there exists an open subset $V_i \subset \P_1$ such that, for each $t \in V_i(\overline{K})$, $\psi_i^{-1}(t)$ is smooth (and we may assume, by further restricting $V_i$, geometrically connected as well, because $\psi_i$ has a geometrically integral generic fiber). Let us denote now by $D'\subset \hat{X}$ the proper subvariety, which is the union of all the following:
	\begin{enumerate}
		\item the fibers $\hat{\pi}^{-1}(x)$, for each $x \notin V_i$, for each $i=1, \dots, m$;
		\item the proper transform of $D \subset X$, and the exceptional locus of $\hat{X}\rightarrow X$;
		\item the proper transform of $X \setminus U$, where $U$ is defined as in Lemma \ref{positivegenericrank} for $(\Eps,\pi)=(\hat{X},\hat{\pi})$. 
	\end{enumerate}
	Let us choose now a $K$-rational point $P \in (\hat{X}\setminus D')(K)$, and let us denote by $E_P$ the fiber $\hat{\pi}^{-1}(\hat{\pi}(P))$. We know, by Lemma \ref{positivegenericrank}, that $E_P$ has infinitely many $K$-rational points. We have assumed, however, that  $X(K) \subset \cup_i \phi_i(Y_i(K)) \cup D(K)$, 
	 and hence 
	\begin{equation}\label{curvaeP}
	E_P(K) \subset \cup_i \psi_i^{-1}(\hat{\pi}(P))(K) \cup (E_P\cap D')(K). 
	\end{equation}

	We claim that the right hand side of \ref{curvaeP} is finite. In fact, for each $i=1,\dots, m $, the morphism $\psi_i^{-1}(\hat{\pi}(P)) \rightarrow E_P$ is ramified by the invariance of the ramification locus under base change (see e.g. \cite[Tag 0C3H]{stacks-project}), and, since the curve $\psi_i^{-1}(\hat{\pi}(P))$ is a smooth geometrically connected complete curve, by Riemann-Hurwitz theorem, it is of genus $>1$. As a consequence, by Faltings' theorem, $\psi_i^{-1}(\hat{\pi}(P))(K)$ is finite for each $i=1,\dots, m$. Moreover, $(E_P\cap D')$ is obviously finite, hence we have proved that the right hand side of \ref{curvaeP} is finite. As we noted before, however, $E_P(K)$ is infinite. We have obtained a contradiction, proving the theorem in the case $n=3$.

	\emph{Case} $n \geq 4$.

	By Bertini's theorem \cite[Remark 10.9.2]{hartshorne}, we know that there exists a Zariski-open subset $U \subset X$ such that, for each $P \in U(\bar{K})$, the generic hyperplane of $\P_n$ passing through $P$ cuts $X$ in a smooth irreducible cubic of dimension $n-2$.
	
	 We choose now a $K$-rational point $P \in U(K)$ 
	, and two $K$-rational (distinct) hyperplanes $H_0\defeq\{h_0=0\}, H_{\infty}\defeq \{h_{\infty}=0\}$ passing through $P$, such that $H_0 \cap X$ is smooth. Let $L \defeq H_0 \cap H_{\infty}$.
	
	Let us consider now the following morphism:
	\begin{equation*}
		\phi:  X \setminus L \cap X \longrightarrow \P_1, \quad
		P \longmapsto [h_0(P):h_{\infty}(P)],
	\end{equation*}
	which extends naturally to a morphism $\hat{\phi}: \Blowup_{L \cap X}X \rightarrow \P_1$. For $t=[t_1:t_2] \in \P_1$, the scheme-theoretic fiber $\hat{\phi}^{-1}(t)$ is isomorphic to the intersection $H_t \cap X$, where $H_t$ denotes the hyperplane $t_1H_0+t_2H_{\infty}=0$ in $\P_n$. Since $H_0 \cap X$ is smooth, the intersection $H_t \cap X$ is smooth for $t$ in a Zariski open subset $V \subset \P_1(\bar{K})$, containing $0$. For $x \in V(K)$, the fiber $\hat{\phi}^{-1}(x)$ is a smooth cubic in an $(n-1)$-dimensional projective space, with a $K$-rational point in it (namely, $P$). Hence, by the induction hypothesis, this fiber has the HP, and hence, since $\P_1/K$ has the HP, $\Blowup_{L \cap X}X$ (and, therefore, $X$) has the HP as well by Theorem \ref{fibrationHP}.
\end{proof}

\section{Surfaces with the Hilbert Property}\label{surfaces}

The proof of Theorem \ref{Thm:teoremone} uses the following lemma, which is Lemma 3.2 of \cite{articoloHP}.

\begin{lemma}\label{lemmagruppi}
	Let $G$ be a finitely generated abelian group of positive rank. Let $n \in \N$ and $\{h_u+H_u\}_{u=1,\dots,n}$ be a collection of finite index cosets in $G$, i.e. $h_u \in G, \ H_u < G$ and $[G:H_u]<\infty$ for each $u=1,\dots, n$.
	If $G \setminus \bigcup_{u=1,\dots,n} (h_u+H_u)$ is finite, then $\bigcup_{u=1,\dots,n} (h_u+H_u)=G$.
\end{lemma}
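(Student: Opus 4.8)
The plan is to exploit the fact that all the cosets involved, as well as the complement of their union, are unions of cosets of a single fixed finite-index subgroup, and that such cosets are infinite because $G$ has positive rank. Concretely, I would first set $N \defeq \bigcap_{u=1}^{n} H_u$. Since each $H_u$ has finite index in $G$, and a finite intersection of finite-index subgroups again has finite index (the index of $N$ is at most $\prod_u [G:H_u]$, as $G/N$ injects into $\prod_u G/H_u$), we get $[G:N] < \infty$. A finite-index subgroup of a finitely generated abelian group has the same rank as the ambient group, so $N$ has positive rank; in particular $N$ is infinite, and therefore every coset $g + N$ is infinite.

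Next I would observe that, because $N \leq H_u$ for every $u$, each subgroup $H_u$ is a union of cosets of $N$, hence so is each translate $h_u + H_u$, and hence so is the union $\bigcup_{u=1}^{n}(h_u + H_u)$. Consequently its complement $G \setminus \bigcup_{u=1}^{n}(h_u + H_u)$ is likewise a union of cosets of $N$ (the complement of a union of $N$-cosets is a union of $N$-cosets, since the $N$-cosets partition $G$).

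Finally, I would argue by contradiction: if $\bigcup_{u=1}^{n}(h_u + H_u) \neq G$, then $G \setminus \bigcup_{u=1}^{n}(h_u + H_u)$ is nonempty and, being a union of $N$-cosets, contains at least one full coset $g + N$. But $|g + N| = |N| = \infty$, contradicting the hypothesis that this complement is finite. Therefore $\bigcup_{u=1}^{n}(h_u + H_u) = G$.

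There is really no serious obstacle here; the only points needing a word of justification are the two standard facts invoked in the first paragraph — that a finite intersection of finite-index subgroups is finite-index, and that a finite-index subgroup of a finitely generated abelian group of positive rank is infinite (equivalently, has positive rank). Both are elementary, and the positivity of the rank of $G$ is precisely what makes the cosets of $N$ infinite, which is the crux of the argument.
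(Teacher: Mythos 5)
Your proof is correct and complete: passing to the common finite-index subgroup $N=\bigcap_{u} H_u$, observing that its cosets partition $G$ and are infinite because $G$ has positive rank, and concluding that a nonempty complement would contain an infinite coset, is exactly the standard argument. The paper itself gives no proof, citing the lemma from Corvaja and Zannier, where essentially this same reduction is carried out.
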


\begin{notation}
	Let $E$ be a smooth projective geometrically connected $k$-surface, endowed with fibrations $\pi_1, \dots, \pi_n:E \rightarrow \P_1, \ n \geq 2$. We call the \textit{fixed locus of} $\pi_1,\dots,\pi_n$ the following reduced subvariety of $E$:
	\[
	\Fix(\pi_1,\dots,\pi_n)= \bigcup \{D: D \text{ is a divisor in } E \text{ and } \restricts{\pi_i}{D} \text{ is constant } \forall i=1,\dots,n \}
	\]
\end{notation}

\begin{remark}
	The subvariety $F \subset E$ described in Theorem \ref{Thm:teoremone} is exactly $\Fix(\pi_1,\dots,\pi_n)$.
\end{remark}

\begin{proof}[Proof of Theorem \ref{Thm:teoremone}]
	Suppose by contradiction that there exist $m \in \N$, irreducible covers $\phi_i:Y_i \rightarrow E, \ i=1,\dots, m$ and a proper subvariety $D \subsetneq E$, such that $E(K) \subset D(K) \cup \cup_i \phi_i(Y_i(K))$, and $\deg \phi_i \geq 2$. We may assume, without loss of generality, that the $Y_i$'s are smooth and geometrically connected.
	
	We say that a cover $Y_i$, and the corresponding $i$, is $\{j_1,\dots,j_k\}$-\textit{unramified}, where $\{j_1,\dots,j_k\}\subset\{1,\dots, m\}$, if, for each $j \in \{j_1,\dots,j_k\}$, the diramation divisor of $\phi_i$ is contained in a finite union of fibers of $\pi_j$. We say that it is $\{j_1,\dots,j_k\}$-\textit{ramified} otherwise. By the simply connectedness hypothesis, no cover is $\{1,\dots,m\}$-unramified.
	
	We say that a point $P \in E(K)$ is $j$-\textit{good} if the fiber $\pi_j^{-1}(\pi_j(P))$ is smooth and geometrically connected of genus $1$, and $\#\pi_j^{-1}(\pi_j(P))(K)= \infty$. By Lemma \ref{positivegenericrank}, there exists an open subset $U\subset E$ such that, for each $P \in U(K)$, $P$ is $j$-good for each $j=1,\dots,m$. We assume moreover, without loss of generality, that $U \cap D= \emptyset$.
	
	For each $1 \leq i \leq m, \ 1\leq j \leq n$, let
	\[
	\pi_j \circ \phi_i:Y_i \xrightarrow{r_{ij}} C_{ij} \rightarrow \P_1
	\]
	be the relative normalization factorization of $\pi_j \circ \phi_i$. We have that the geometric generic fiber of $r_{ij}$ is irreducible, $C_{ij}$ is a smooth complete geometrically connected curve and $C_{ij} \rightarrow \P_1$ is a finite morphism. Let $\widehat{E \times_{\P_1} C_{ij}} \rightarrow E \times_{\P_1} C_{ij}$ be a desingularization of $E \times_{\P_1} C_{ij}$. Then there exists a commutative diagram as follows:
	\begin{center}
		\begin{tikzcd}
			\hat{Y_i} \arrow[r, "\psi_{ij}"] \arrow[d, "b_i"] & \widehat{E \times_{\P_1} C_{ij}} \arrow[r] & E \\
			Y_i \arrow[rru, swap, "\psi_i"] \\
		\end{tikzcd}
	\end{center}
	where $b_i: \hat{Y_i} \rightarrow Y_i$ is the composition of a finite sequence of blowups.

	When $i$ is $j$-ramified, since the diramation locus of $\psi_i$ contains at least one component transverse to the fibration $\pi_j$, and the morphism $\widehat{E \times_{\P_1} C_{ij}} \rightarrow E$ is $j$-unramified, $\psi_{ij}$ must have at least one irreducible component of the diramation locus which is transverse to the fibration $\widehat{E \times_{\P_1} C_{ij}} \rightarrow C_{ij}$.
	Hence, by the invariance of the ramification locus under base change \cite[Tag 0C3H, Lemma 30.10.1]{stacks-project}, when $i$ is not $j$-unramified, the geometric generic fiber of $\hat{Y_i}\rightarrow C_{ij}$, which is isomorphic to the geometric generic fiber of $r_{ij}$, is ramified over the geometric generic fiber of $\widehat{E \times_{\P_1} C_{ij}} \rightarrow C_{ij}$, which has genus $1$. Therefore, it is a curve of genus $>1$. Hence, when $i$ is $j$-ramified, the geometric generic fiber of $\pi_j \circ \phi_i$ is a finite union of curves of genus $>1$.  When $i$ is $j$-unramified one shows analogously that the geometric generic fiber of $\pi_j \circ \phi_i$ is a finite union of curves of genus $1$.
	
	We have that, for each $P \in U(K)$:
	\[
	P \in \pi_n^{-1}(\pi_n(P))(K)\cap U\subset\]\[ \bigcup_{i \ n-\text{unramified}} \phi_i(Y_i(K))\cap \pi_n^{-1}(\pi_n(P)) \cup \bigcup_{i \  n-\text{ramified}} \phi_i(Y_i(K))\cap \pi_n^{-1}(\pi_n(P))=
	\]	
	\[
	\bigcup_{i \ n-\text{unramified}} \phi_i((\pi_n\circ \phi_i)^{-1}(\pi_n(P)(K))) \cup \bigcup_{i \  n-\text{ramified}} \phi_i((\pi_n\circ \phi_i)^{-1}(\pi_n(P)(K))).
	\]
	Hence, as noted before, when $i$ is $n$-ramified, after restricting (without loss of generality) $U$ to a smaller Zariski open subset, $(\pi_n\circ \phi_i)^{-1}(\pi_n(P))$ is a curve of genus $>1$. Therefore, by Falting's Theorem, we deduce that:
	\[
	\pi_n^{-1}(\pi_n(P))(K) \subset \bigcup_{i \ n-\text{unramified}} \phi_i((\pi_n\circ \phi_i)^{-1}(\pi_n(P)(K))) \cup A_0(P),
	\]
	where $A_0(P)$ is a finite set. Moreover, when $i$ is $n$-unramified, after restricting (without loss of generality) $U$ to a smaller Zariski open subset, $(\pi_n\circ \phi_i)^{-1}(\pi_n(P))$ is a curve of genus $1$. Therefore, by the weak Mordell-Weil theorem, we have that, for each $i=1,\dots, m$, $(\pi_n\circ \phi_i)^{-1}(\pi_n(P))(K)\subset \pi_n^{-1}(\pi_n(P))(K)$ is either empty or a finite index coset.
	
	Hence, by Lemma \ref{lemmagruppi}, we deduce that:
	\[
	P \in \pi_n^{-1}(\pi_n(P))(K) \subset \bigcup_{i \ n-\text{unramified}} \phi_i (Y_i(K)).
	\]
	Therefore we have that
	\[
	E(K)\subset \bigcup_{i \ n-\text{unramified}} \phi_i (Y_i(K)) \cup (E\setminus U).
	\]
	We have now reduced to the case where all the covers $Y_i$ are $n$-unramified. Proceeding with an easy induction on $n$, we may reduce to the case where all the covers $Y_i$ are $\{1,\dots,n\}$-unramified. But there are no such covers by hypothesis, whence we deduce that $E(K)$ is not Zariski-dense in $E$, leading to the desired contradiction.
\end{proof}

\subsection{A family of K3 surfaces with the Hilbert Property}

As an application of Theorem \ref{Thm:teoremone} we describe now a family of K3 surfaces\footnote{K3 surfaces (and, in general, Calabi-Yau varieties) represent a ``limiting case" for the study of rational points, at least conjecturally. In fact, the conjectures of Vojta suggest that on algebraic varieties there should be ``less" rational points as the canonical bundle gets ``bigger". Hence, since for K3 surfaces the canonical bundle is trivial by definition, we expect the rational points here not to be ``too much", yet their existence (and Zariski-density) is not precluded. In fact, proving the HP, we are providing some examples of abudance of rational points in such surfaces.} with the Hilbert Property.

For $\lambda \in K^*$, and $c_1,c_2 \in K[x,y,z]$ cubic homogeneous polynomials, let 
\begin{equation}\label{kummer}
X'_{\lambda}(c_1,c_2)\defeq\{([w_0:w_1],[x:y:z])\in \P_1\times \P_2\ | \ w_0^2c_1(x,y,z)=\lambda w_1^2c_2(x,y,z)\}.
\end{equation}

The surfaces $X'_{\lambda}(c_1,c_2)$ are, up to a birational transformation, endowed with multiple elliptic fibrations, usually defined over $\bar{K}$, whose construction we recall in the next paragraphs.
In some particular cases, when enough of these fibrations are defined over $K$, this allows us to use Theorem \ref{Thm:teoremone} to prove the Hilbert Property for these varieties.

 \smallskip

\begin{remark}\label{Rmk:casoparticolarekummer}
	When $c_1(x,y,z)=f_1(x,z)$ does not depend on $y$, $c_2(x,y,z)=f_2(y,z)$ does not depend on $x$, and both $f_1$ and $f_2$ do not have multiple roots, equation (\ref{kummer}) describes a Kummer surface (i.e. a quotient of an abelian surface by the group of isomorphisms $\{\pm 1\}$).
	
	In fact, in this case, equation (\ref{kummer}) describes, up to a birational transformation, the quotient of $E_1 \times E_2$ by the group $\{\pm 1\}$, where $E_1$ and $E_2$ are the elliptic curves defined by the following Weierstrass equations:
	\begin{equation}\label{ellittichechecopronolakummer}
	E_1: \ w^2=f_1(x,z), \quad
	E_2: \ w^2=f_2(y,z).
	\end{equation} 
\end{remark}

\paragraph{Construction of the Elliptic Fibrations}

 We give now an explicit construction of a smooth model of of $X'_{\lambda}(c_1,c_2)$ and of the elliptic fibrations it is endowed with, under a genericity assumption on $c_1$ and $c_2$. We avoid going into detail, as these constructions are described thoroughly by Garbagnati and Salgado in \cite{garbagnati}.

Let $P_1,\dots ,P_9$ be $9$ (distinct) points in $\P_2(\bar{K})$ such that:
\begin{enumerate}
	\item $P_1,\dots, P_4$ are the four points of intersection of two smooth conics $Q^1_1\defeq \{q^1_1=0\},Q^1_2\defeq \{q^1_2=0\}$ in $\P_2$, defined over $K$;
	\item $P_5,\dots, P_8$ are the four points of intersection of two smooth conics $Q^2_1\defeq \{q^2_1=0\},Q^2_2\defeq \{q^2_2=0\}$ in $\P_2$, defined over $K$;
	\item The eight points $P_1,\dots, P_8$ are in generic position\footnote{\label{genericpoints}By generic position, we mean that no three of these points lie on a line, and no six of these points lie on a conic.};
	\item $P_1, \dots, P_9$ are the nine points of intersection of two smooth cubics $C_1\defeq \{c_1=0\},C_2\defeq \{c_2=0\}$ in $\P_2$, defined over $K$.
\end{enumerate}

\begin{definition}\label{good}
	We say that a $9$-tuple $(P_1,\dots, P_9) \in P_2(\bar{K})^9$ is \emph{good} if it satisfies the four conditions above.
\end{definition}

We note that, by choosing two sufficiently Zariski generic $4$-tuples $p_1, \dots, p_4$ and $p_5, \dots, p_8$ such that both $p_1+\dots +p_4$ and $p_5+\dots+p_8$ are defined over $K$, and letting $p_9$ be the unique ninth intersection of the pencil of cubics through $p_1,\dots,p_8$, the $9$-tuple $p_1,\dots,p_9$ is good.

We assume hereafter that a choice of a good $9$-tuple of points $P_1,\dots,P_9$, of the two cubics $C_1, C_2$ and of the conics $Q^i_j, \ i,j=1,2$ has been made.

Let $R\defeq \Blowup_{P_1+\dots+P_9}\P_2$ be the blowup of $\P_2$ in the nine points $P_1,\dots,P_9$. The two cubics $C_1,C_2$ define an elliptic fibration on $R$, which we denote by $\mathcal{C}$, defined as $\mathcal{C}(p)=[c_1(p):c_2(p)]$.

The fibers of $\mathcal{C}$ are by construction the proper transforms of the elements of the pencil generated by $C_1, C_2$. 

For $\lambda \in K^*$, let $f_{\lambda}:\P_1 \rightarrow \P_1$ be the morphism defined by $f_{\lambda}([w_0:w_1])=[w_0^2:\lambda w_1^2]$. Let also $X_{\lambda}(c_1,c_2)$ be the smooth surface defined as the fibered product $R \times_{\mathcal{C},f_{\lambda}} \P_1$, $\alpha_{\lambda}:X_{\lambda}(c_1,c_2)\rightarrow R$ be the projection on the first factor, and $\phi_{\lambda}:X_{\lambda}\rightarrow \P_1$ be the projection on the second factor. The surface $X_{\lambda}(c_1,c_2)$ is a K3 surface.

\begin{note}
	We observe that, by construction, $X_{\lambda}(c_1,c_2)$ is birational to $X'_{\lambda}(c_1,c_2)$.
\end{note}

The surface $X_{\lambda}(c_1,c_2)$ is endowed with at least three elliptic fibrations. The first one is $\phi_{\lambda}$. The second and third one, which we denote by $\widetilde{\mathcal{Q}}^1$ and $\widetilde{\mathcal{Q}}^2$, are the proper transforms of the two pencils of conics generated, respectively, by $\{Q^1_1,Q^1_2\}$ and by $\{Q^2_1,Q^2_2\}$. I.e., $\widetilde{\mathcal{Q}}^i=\mathcal{Q}^i \circ \alpha_{\lambda}$, where the maps $\mathcal{Q}^i:R \rightarrow \P_1$  are defined as $\mathcal{Q}^i(p)=[q^i_1(p):q^i_2(p)], \ i=1,2$.

\begin{proposition}\label{Thm:Zariskidense}
	Let $P_1,\dots,P_9 \in \P_2(\bar{K})$ be a good $9$-tuple of points, and $C_1\defeq\{c_1=0\}, C_2\defeq\{c_2=0\}$ be two smooth cubics such that $C_1\cap C_2=\{P_1,\dots,P_9\}$. If $X_{\lambda}(c_1,c_2)$ has Zariski-dense $K$-rational points, then it has the Hilbert Property.
\end{proposition}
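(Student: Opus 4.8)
The plan is to invoke Theorem \ref{Thm:teoremone} for $E = X_{\lambda}(c_1,c_2)$, equipped with the three elliptic fibrations $\phi_{\lambda}, \widetilde{\mathcal{Q}}^1, \widetilde{\mathcal{Q}}^2$ constructed above, all of which are defined over $K$. Three of the four hypotheses of that theorem hold by construction: $X_{\lambda}$ is a smooth projective geometrically connected surface (the double cover $\alpha_{\lambda}:X_{\lambda}\to R$ being branched along the two disjoint smooth fibers of $\mathcal{C}$ over the branch points of $f_{\lambda}$), it is endowed with three (hence $\geq 2$) elliptic fibrations, and the Zariski-density of $X_{\lambda}(K)$ is exactly the hypothesis of the Proposition. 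Everything therefore reduces to verifying that $X_{\lambda}\setminus F$ is simply connected, where $F = \Fix(\phi_{\lambda}, \widetilde{\mathcal{Q}}^1, \widetilde{\mathcal{Q}}^2)$; this is the step I expect to carry the real content.

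I would prove the clean statement that $F = \emptyset$, which immediately gives what we want, since then $X_{\lambda}\setminus F = X_{\lambda}$ is a K3 surface and hence simply connected. To transfer the question to $R$, I would use that $\alpha_{\lambda}$ and $f_{\lambda}$ are finite and that $\mathcal{C}\circ\alpha_{\lambda} = f_{\lambda}\circ\phi_{\lambda}$ and $\widetilde{\mathcal{Q}}^i = \mathcal{Q}^i\circ\alpha_{\lambda}$: an irreducible divisor $D\subset X_{\lambda}$ is contracted by all three fibrations if and only if its image $\alpha_{\lambda}(D)\subset R$ — again a divisor, as $\alpha_{\lambda}$ is finite — is contracted by each of $\mathcal{C}, \mathcal{Q}^1, \mathcal{Q}^2$. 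So it is enough to show $\Fix(\mathcal{C}, \mathcal{Q}^1, \mathcal{Q}^2) = \emptyset$ on $R = \Blowup_{P_1+\dots+P_9}\P_2$.

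For that I would enumerate the vertical divisors of each pencil. Because, for a good tuple, the two cubics meet transversally at $P_1,\dots,P_9$ and each pair of conics $Q^i_1, Q^i_2$ meets transversally at its four base points, one checks that $E_1,\dots,E_9$ are all sections of $\mathcal{C}$, that $\mathcal{Q}^1$ contracts $E_5,\dots,E_9$ while $E_1,\dots,E_4$ are sections of $\mathcal{Q}^1$, and symmetrically that $\mathcal{Q}^2$ contracts $E_1,\dots,E_4,E_9$ while $E_5,\dots,E_8$ are sections of $\mathcal{Q}^2$. Hence a divisor vertical for $\mathcal{Q}^1$ is a smooth conic fiber, one of $E_5,\dots,E_9$, or the proper transform of a line through two of $P_1,\dots,P_4$ (a component of a degenerate member of the pencil), and likewise for $\mathcal{Q}^2$ with $\{P_1,\dots,P_4\}$ replaced by $\{P_5,\dots,P_8\}$. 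Comparing the two lists, a divisor vertical for both $\mathcal{Q}^1$ and $\mathcal{Q}^2$ has to be $E_9$: a smooth conic fiber of one pencil is transverse to the other (no conic passes through six of the $P_i$), the proper transforms of lines lie in Picard classes distinct from the exceptional classes and from one another, and the no-three-collinear part of the good-tuple hypothesis rules out a line meeting two of $P_1,\dots,P_4$ and two of $P_5,\dots,P_8$ at once. Since $E_9$ is a section of $\mathcal{C}$, it is not vertical for $\mathcal{C}$, so $\Fix(\mathcal{C},\mathcal{Q}^1,\mathcal{Q}^2) = \emptyset$; thus $F = \emptyset$, and Theorem \ref{Thm:teoremone} completes the proof. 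The main obstacle is precisely this bookkeeping on $R$ pinning down $E_9$ as the only divisor vertical for both conic pencils — and it is exactly there that the genericity of the nine points enters, through the absence of collinear triples.
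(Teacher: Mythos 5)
Your proposal follows the same route as the paper: apply Theorem \ref{Thm:teoremone} to $X_{\lambda}(c_1,c_2)$ with the three fibrations $\phi_{\lambda}, \widetilde{\mathcal{Q}}^1, \widetilde{\mathcal{Q}}^2$, using that a K3 surface is simply connected and that $\Fix(\phi_{\lambda},\widetilde{\mathcal{Q}}^1,\widetilde{\mathcal{Q}}^2)=\emptyset$. The paper merely asserts this last fact, whereas you verify it correctly by descending to $R$ via the finite maps $\alpha_{\lambda}$ and $f_{\lambda}$ and using the genericity of the nine points to single out $E_9$ as the only divisor vertical for both conic pencils; this additional bookkeeping is sound.
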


\begin{proof}
	Since $\Fix(\phi_{\lambda}, \widetilde{\mathcal{Q}}^1, \widetilde{\mathcal{Q}}^2)= \emptyset$, this is an immediate consequence of Theorem \ref{Thm:teoremone} applied to $X_{\lambda}(c_1,c_2)$ (which, being a K3 surface, is simply connected), with fibrations $\phi_{\lambda}, \widetilde{\mathcal{Q}}^1$ and $\widetilde{\mathcal{Q}}^2$.
\end{proof}

\begin{remark}\label{Rmk:riassunto}
	In general, looking at the explicit equations, it is easy to show that, given $c_1, c_2$ cubic polynomials, there exist always infinitely many $\lambda \in K^*$ such that $X_{\lambda}(c_1,c_2)$ has Zariski-dense $K$-rational points. Hence, as a corollary of Proposition \ref{Thm:Zariskidense}, one obtains that, for any $c_1,c_2$, there exist always infinitely many $X_{\lambda}(c_1,c_2)$ with the Hilbert Property.
\end{remark}

\subsection{Kummer surfaces}

The following proposition is another application of Theorem \ref{Thm:teoremone}.

\begin{proposition}\label{Cor:kummer}
	Let $E_1$ and $E_2$ be two elliptic curves defined over a number field $K$, with positive Mordell-Weil rank. The Kummer surface $S \defeq E_1\times E_2 /\{\pm 1\}$ has the Hilbert Property.
\end{proposition}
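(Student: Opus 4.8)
The plan is to apply Theorem~\ref{Thm:teoremone} to a smooth projective model of $S$. First I would pass to the minimal resolution $X\to S$ of the $16$ nodes: $X$ is a K3 surface, hence simply connected, and since $X\to S$ is an isomorphism away from the nodes it is enough to prove the Hilbert Property for $X$ (a thin set covering $S(K)$ pulls back, together with the proper closed subvariety given by the union of the $16$ exceptional curves, to a thin set covering $X(K)$). Next I would fix $K$-rational Weierstrass models $E_1:\ w^2=f_1(x,z)$, $E_2:\ w^2=f_2(y,z)$ with $f_1,f_2$ separable binary cubics; by Remark~\ref{Rmk:casoparticolarekummer}, $S$ is birational to $X'_1(f_1(x,z),f_2(y,z))$, so $X$ is birational to the double cover of $\P_1\times\P_1=(E_1/\{\pm 1\})\times(E_2/\{\pm 1\})$ branched over the eight lines $\{u=b_i\}$ and $\{v=c_j\}$ $(i,j=1,\dots,4)$, where $b_i,c_j$ are the images of the $2$-torsion of $E_1,E_2$ and the $16$ nodes lie over the points $(b_i,c_j)$; concretely $X$ is given by $w^2=g_1(u)g_2(v)$ with $g_1,g_2$ separable quartics. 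The positive rank hypothesis enters only here: $E_1(K)$ and $E_2(K)$ are infinite, so $E_1(K)\times E_2(K)$ is Zariski-dense in $E_1\times E_2$, and its image in $X$ (taken away from the nodes) is Zariski-dense, giving hypothesis (a) of Theorem~\ref{Thm:teoremone}. What remains is to produce elliptic fibrations $\pi_1,\dots,\pi_n:X\to\P_1$ over $K$ with $\Fix(\pi_1,\dots,\pi_n)=\emptyset$ (or with simply connected complement).

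Two fibrations are free: the projections of $\P_1\times\P_1$ pull back to elliptic fibrations $\pi_u,\pi_v:X\to\P_1$ over $K$, with generic fibres the quadratic twists of $E_2,E_1$ by $g_1(u),g_2(v)$. Each has four $I_0^*$ fibres, and the exceptional curve $N_{ij}$ over $(b_i,c_j)$ lies in the fibre of $\pi_u$ over $b_i$ and in the fibre of $\pi_v$ over $c_j$; hence $\Fix(\pi_u,\pi_v)$ is the union of all $16$ curves $N_{ij}$, which is nonempty, so more fibrations are needed. I would get them by restricting the double cover to pencils of curves on $\P_1\times\P_1$ whose base locus is a Galois-stable set of nodes: a pencil of bidegree-$(1,1)$ curves through two nodes yields an elliptic fibration (the two prescribed node-points contribute no ramification to the double cover, so there are exactly four branch points) that is non-constant precisely on the corresponding two curves $N_{ij}$; a pencil of bidegree-$(1,2)$ curves through a transversal of the $4\times 4$ array of nodes yields one non-constant on the corresponding four $N_{ij}$; and more intrinsically, any Galois-invariant primitive nef class $D\in\mathrm{NS}(X)$ with $D^2=0$ gives a $K$-rational elliptic fibration, non-constant on every $N_{ij}$ with $D\cdot N_{ij}>0$. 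When the Galois action on the $16$ nodes has only small orbits (e.g.\ when $E_1,E_2$ have enough rational $2$-torsion) one can pair the nodes into Galois-stable pairs, use the associated bidegree-$(1,1)$ fibrations, and together with $\pi_u,\pi_v$ get $\Fix=\emptyset$, so that Theorem~\ref{Thm:teoremone} applies.

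The hard part will be the general Galois action. A pencil of bidegree-$(a,b)$ curves on $\P_1\times\P_1$ has genus-$1$ fibres on the double cover only when its base locus consists of $2ab$ of the $16$ nodes, which forces $(a,b)\in\{(1,1),(1,2),(2,1),(2,2)\}$, hence at most $8$ nodes; since a Galois-stable set of nodes meeting a Galois orbit contains that entire orbit, such pencils cannot cover an orbit of size greater than $8$. Yet when, say, the mod-$2$ Galois images of $E_1$ and $E_2$ are both equal to $\mathrm{GL}_2(\mathbb F_2)$ and linearly disjoint, the nine nodes $(b_i,c_j)$ with $b_i$ and $c_j$ non-rational form a single Galois orbit (and the two orbits of size $3$, each supported on one line of the branch divisor, are equally out of reach of these pencils). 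In that case one is forced to write down by hand a Galois-invariant primitive nef class $D$ of self-intersection $0$ in $\mathrm{NS}(X)$ meeting every exceptional curve of such an orbit positively --- built from $[\pi_u^{-1}(\mathrm{pt})]$, $[\pi_v^{-1}(\mathrm{pt})]$, the classes of the images in $X$ of the curves $\{t\}\times E_2$ and $E_1\times\{t'\}$ with $t\in E_1[2]$, $t'\in E_2[2]$, and the $N_{ij}$ --- and then to check that it is nef; this last step is delicate, since the $(-2)$-reflections one would use to move a class with $D^2=0$ in the positive cone onto the nef cone need not preserve Galois-invariance, so the relevant negative curves have to be analysed directly. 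This Galois-equivariant construction of enough elliptic fibrations on the Kummer surface is, I expect, the core of the proof; with the resulting finite family of $K$-rational fibrations having empty joint fixed locus, Theorem~\ref{Thm:teoremone} yields the Hilbert Property for $X$, and therefore for $S$.
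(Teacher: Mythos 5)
There is a genuine gap here. Your argument reduces the proposition to producing, for an \emph{arbitrary} Galois action on the $16$ nodes, a finite family of $K$-rational elliptic fibrations whose joint fixed locus is empty, and you do not carry this out: you yourself flag the case of a single Galois orbit of nine nodes as requiring a Galois-invariant nef class of square zero whose nefness check is ``delicate'' and is left open. As written, the proof only covers situations with small Galois orbits on the $2$-torsion (e.g.\ enough rational $2$-torsion), whereas the statement has no such hypothesis. So the core of the proof, by your own account, is missing.

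The paper's proof sidesteps this in two ways that your sketch passes over. First, besides the two projections it uses a third fibration that is \emph{canonically} defined over $K$, with no condition on the $2$-torsion: on the model $w_1^2f_2(x,z)=w_2^2f_1(y,z)$ it is $([x:y:z],[w_1:w_2])\mapsto[w_1:w_2]$, i.e.\ the pencil of plane cubics $t^2f_2(x,z)=f_1(y,z)$, which upstairs on $E_1\times E_2$ is $[y_1z_2:y_2z_1]$; a local-parameter computation shows it is non-constant on the $10$ exceptional curves lying over pairs $(T_1,T_2)$ with $T_1,T_2$ both of order $2$ or both equal to $O$. Second --- and this is the decisive point --- Theorem \ref{Thm:teoremone} does not require $\Fix=\emptyset$, only that the complement of $\Fix$ be simply connected. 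With these three fibrations, $\Fix$ is the union of the $6$ exceptional lines over the points $(T_1,T_2)$ with exactly one of $T_1,T_2$ equal to $O$, and the paper then shows directly (Lemma \ref{Prop:simplyconnected}) that the complement of these $6$ lines is simply connected: the orbifold fundamental group of $S$ minus the $16$ nodes is $(\Lambda_1\times\Lambda_2)\rtimes\langle\iota\rangle$, and the loops around the $10$ blown-up nodes already normally generate it, so Van Kampen kills everything. You mention the simply-connected-complement option only in passing and never use it; exploiting it is exactly what removes any case analysis on the Galois action and closes the gap your approach runs into.
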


\begin{proof}
	A desingularization of $S$, which we denote by $\tilde{S}$, may be obtained as the quotient by $\{\pm 1\}$ of the blow up $\widehat{E_1\times E_2}$ of $E_1 \times E_2$ in the $16$  $2$-torsion points. We denote the set of the images of these points in ${S}$ with $\mathcal{T}$, and the corresponding exceptional lines in $\tilde{S}$ with $\mathcal{L}$. Moreover, we denote by $b: \tilde{S} \rightarrow S$ the just described desingularization morphism, and by $q: E_1 \times E_2 \rightarrow S$ the quotient map.
	
	The surface $\tilde{S}$ has at least three elliptic fibrations, defined over $K$. Two of these, which we denote by $\pi_i, \ i=1,2$ are the following compositions:
	\[
	\tilde{S} \rightarrow S = E_1\times E_2 /\{\pm 1\} \rightarrow E_i/\{\pm 1\}\cong \P_1, \ i =1,2.
	\]
	If $E_1\defeq \{y_1^2z_1=f_1(x_1,z_1)\}$ and $E_2\defeq \{y_2^2z_2=f_2(x_2,z_2)\}$, then, as noted in Remark \ref{Rmk:casoparticolarekummer}, $S$ is birational to the surface defined by the following equation for $([x:y:z],[w_1:x_2]) \in \P_2 \times \P_1$:
	\[
	w_1^2f_2(x,z)=w_2^2f_1(y,z).
	\]
	We have that:
	\begin{equation}\label{Eq:pi3}
	[w_1:w_2] \circ q = [y_1z_2:y_2z_1].
	\end{equation}
	We then define the third fibration, $\pi_3$, to be the extension (as a rational map) to $\tilde{S}$ of the map $([x:y:z],[w_1:w_2])\rightarrow [w_1:w_2]$. Let us check that $\Dom (\pi_3)=\tilde{S}$. The map $[y_1z_2:y_2z_1]$ is well-defined on $\widehat{E_1\times E_2}$. Moreover, since $[y_1z_2:y_2z_1]:\widehat{E_1\times E_2} \rightarrow \P_1$ is invariant by the action of $\{\pm 1 \}$, it induces indeed a well-defined morphism on the quotient $\tilde{S}=\widehat{E_1\times E_2}/\{\pm 1 \}$.
	
	We have that, since $y_i$ is a local parameter at points of order $2$ in $E_i$, and $z_i$ is a local parameter at $O \in E_i$, the morphism $[y_1z_2:y_2z_1]:\widehat{E_1\times E_2} \rightarrow \P_1$ is non-constant on the exceptional lines lying over the points $(T_1,T_2)$, when both $T_1$ and $T_2$ have order $2$ or both have order $0$, and it is constant on the other exceptional lines.
	
	It follows that $\Fix(\pi_1,\pi_2,\pi_3)$ is the union of the $6$ exceptional lines in $\tilde{S}$ lying over the points $(T_1,T_2)$, when exactly one of $T_1$, $T_2$ has order $2$ and the other is $O$. We denote the union of these $6$ points in ${S}$ with $\mathcal{T}^b$, and the corresponding lines in $\tilde{S}$ with $\mathcal{L}^b\defeq b^{-1}(\mathcal{T}^b)$. Since, by hypothesis, $S(K)\supset q(E_1(K)\times E_2(K))$ is Zariski-dense in $S$, the proposition follows from Theorem \ref{Thm:teoremone} and the following lemma.
\end{proof}

\begin{remark}\label{Rmk:zariskidensity}
	In Proposition \ref{Cor:kummer} the hypothesis that the two curves $E_1, E_2$ have positive Mordell-Weil rank is just used to guarantee that $K$-rational points are Zariski-dense in $S$. However, one can remove this hypothesis in the case that the $j$-invariants of $E_1$ and $E_2$ are not both equal to $0$ or $1728$: in fact, under these assumptions, Kuwata and Wang showed in \cite{kummer} that $K$-rational points\footnote{They work in the specific case $K=\Q$, but the part of their paper where they prove Zariski-density of rational points can be rephrased ad litteram over any number field.} are always Zariski-dense.
\end{remark}

\begin{lemma}\label{Prop:simplyconnected}
	The surface $(\tilde{S}\setminus \mathcal{L}^b)/\C$ is (topologically) simply connected.
\end{lemma}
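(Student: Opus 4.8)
## Proof proposal for Lemma \ref{Prop:simplyconnected}

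The plan is to compute the fundamental group of $(\tilde{S}\setminus\mathcal{L}^b)_{\C}$ by working upstairs on the abelian surface $A\defeq E_1\times E_2$ and using the quotient-and-blowup structure. First I would pass to the complement of all $16$ exceptional lines: let $A^\circ\defeq A\smallsetminus A[2]$ be the complement of the $16$ two-torsion points, which, being the complement of finitely many points in a smooth complex surface, has $\pi_1(A^\circ)\cong\pi_1(A)\cong\Z^4$ (removing a codimension-$2$ subset does not change $\pi_1$ of a smooth variety). The involution $\iota=\{\pm1\}$ acts freely on $A^\circ$, so $A^\circ\to A^\circ/\iota$ is a topological covering map of degree $2$, and $\tilde S\smallsetminus\mathcal{L}=(\widehat{E_1\times E_2}\smallsetminus(\text{all }16\text{ lines}))/\iota=A^\circ/\iota$. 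Thus $\pi_1(A^\circ/\iota)$ fits into a short exact sequence $1\to\Z^4\to\pi_1(A^\circ/\iota)\to\Z/2\to1$, realized concretely: $\pi_1(A^\circ/\iota)$ is the group of affine transformations of $\C^2$ generated by the translation lattice $\Lambda=\Lambda_1\times\Lambda_2$ and the map $v\mapsto -v$ (the orbifold/Bieberbach-type fundamental group of the Kummer orbifold minus its singular points), which has no torsion issues since $\iota$ acts freely on $A^\circ$.

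Next I would put back the six lines of $\mathcal{L}^b$. The space $\tilde S\smallsetminus\mathcal{L}^b$ is obtained from $\tilde S\smallsetminus\mathcal{L}=A^\circ/\iota$ by re-adjoining the ten exceptional lines lying over the ten torsion pairs $(T_1,T_2)$ with $T_1,T_2$ both of order $2$ or both equal to $O$ — equivalently, $\tilde S\smallsetminus\mathcal{L}^b$ is the quotient by $\iota$ of $\widehat{A}'\defeq$ the blowup of $A$ at those ten points, with the remaining six exceptional lines deleted. Adding back an irreducible divisor to a smooth surface kills the loop going around it, so $\pi_1(\tilde S\smallsetminus\mathcal{L}^b)$ is the quotient of $\pi_1(A^\circ/\iota)$ by the normal subgroup generated by the small loops around those ten lines. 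The key computation is to identify these loops: around the exceptional line over a point $(T_1,T_2)$ fixed by $\iota$, the local picture is $\C^2/(\pm1)$ blown up at the origin, and the boundary loop of the exceptional line is exactly the class of $v\mapsto-v$ based near $(T_1,T_2)$ — this is where the hypothesis that $\iota$ fixes these ten points (they are precisely the points where both coordinates are $2$-torsion or both are $O$, i.e. the $\iota$-fixed points of $A$) does the work. So modding out by the loops around all ten such lines forces the reflection $v\mapsto-v$ (conjugated by all lattice translations) to become trivial.

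Finally I would assemble the count: $\pi_1(\tilde S\smallsetminus\mathcal{L}^b)$ is generated by $\Lambda\cong\Z^4$ together with $r\defeq(v\mapsto-v)$, subject to $r^2=1$, $r t r^{-1}=t^{-1}$ for $t\in\Lambda$, and now additionally $r=1$ and $t r t^{-1}=1$ for all $t\in\Lambda$ (coming from the ten fixed points, whose representatives in $A$ are $\tfrac12\Lambda/\Lambda$, a full set of coset representatives for $\Lambda$ in $\tfrac12\Lambda$, so conjugates of $r$ by these cover enough translations). From $r=1$ the relation $rtr^{-1}=t^{-1}$ gives $t=t^{-1}$, i.e. $t^2=1$ for every $t\in\Lambda\cong\Z^4$; since $\Lambda$ is torsion-free this forces $\Lambda=1$. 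Hence $\pi_1(\tilde S\smallsetminus\mathcal{L}^b)$ is trivial, which is the claim.

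The main obstacle I anticipate is the middle step: pinning down \emph{exactly} which element of $\pi_1(A^\circ/\iota)$ the boundary loop of each re-added exceptional line represents, and checking that the ten fixed points really do give conjugates of $r$ by a generating set of translations (so that the quotient collapses completely rather than to some intermediate group). This requires a careful local analysis of the blowup of the quotient singularity $\C^2/(\pm1)$ — the $A_1$ singularity — near each of the ten $\iota$-fixed points of $A$, and a bookkeeping check that $\{T_1,T_2:\ \mathrm{ord}\le 2\text{ componentwise, equal order}\}$ maps onto $\tfrac12\Lambda/\Lambda$. Once that local identification is in hand, the group theory is immediate.
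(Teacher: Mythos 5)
Your overall strategy is the same as the paper's: identify $\pi_1(\tilde S\setminus\mathcal{L})$ with the affine group $\Lambda\rtimes\langle r\rangle$ (where $\Lambda=\Lambda_1\times\Lambda_2$ and $r\colon v\mapsto -v$) via the cover $\C^2\setminus\tfrac12\Lambda\to\tilde S\setminus\mathcal{L}$, observe that re-adjoining the ten ``good'' exceptional lines kills the loops around them, and identify the loop around the line over a $2$-torsion point $T$ with the point reflection $t_{2T}\,r$. Up to that point the proposal matches the paper. The problem is in the final group-theoretic assembly, where there are two genuine errors. First, the deduction ``$r=1$ and $rtr^{-1}=t^{-1}$ give $t^2=1$ for every $t\in\Lambda$; since $\Lambda$ is torsion-free this forces $\Lambda=1$'' is a non sequitur: the relation $t^2=1$ holds in the \emph{quotient} group, whose translation part is a quotient of $\Lambda$, hence a quotient of $(\Z/2)^4$ --- not automatically trivial. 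Indeed, if the only relation imposed were $r=1$ (together with its conjugates), the resulting group would be exactly $(\Z/2\Z)^4\neq 1$. Torsion-freeness of $\Lambda$ upstairs says nothing about its image downstairs.

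Second, the input you need to rescue this --- that the killed loops supply enough translation relations --- is misstated. You claim the ten good points give ``a full set of coset representatives for $\Lambda$ in $\tfrac12\Lambda$'' and propose to check that they ``map onto $\tfrac12\Lambda/\Lambda$''; that check fails, since $\tfrac12\Lambda/\Lambda$ has $16$ elements and only $10$ of them are good (the $6$ points of $\mathcal{T}^b$ are precisely the missing ones, and they are excluded by construction because $\pi_3$ is constant on the corresponding lines). What is actually needed, and what the paper verifies explicitly, is that the classes $2T\bmod 2\Lambda$, as $T$ ranges over the \emph{nine} nonzero good points (those with both coordinates of exact order $2$), generate $\Lambda/2\Lambda\cong(\Z/2)^4$; combined with $r=1$ coming from $T=(O,O)$, the relations $t_{2T}r=1$ then force the whole of $\Lambda$ into the kernel. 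This is true but not automatic --- it is exactly the computation $e_1=(e_1+e_4)+(e_1+e_3)-(e_1+e_3+e_4)$ and its analogues in the paper --- and the argument would genuinely fail for a different choice of six excluded lines whose complementary classes spanned only a proper subgroup. So the missing step is not bookkeeping: it is the one place where the specific shape of $\mathcal{T}^b$ enters, and your proof as written does not establish it.
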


\begin{proof}
	Let $\Lambda_1=<e_1,e_2>$ and  $\Lambda_2=<e_3,e_4>$ be lattices in $\C$ such that $E_1 \cong \C/\Lambda_1$, and $E_2 \cong \C/\Lambda_2$ as analytic spaces.
	We observe that the universal cover of $\tilde{S}\setminus \mathcal{L}=S\setminus \mathcal{T}$ is the following composition \[\C^2 \setminus \frac{1}{2}(\Lambda_1\times \Lambda_2) \rightarrow \C/\Lambda_1 \times \C/\Lambda_2 \setminus q^{-1}(\mathcal{T}) \cong E_1 \times E_2 \setminus q^{-1}(\mathcal{T}) \xrightarrow{q} S\setminus \mathcal{T}. \] Therefore
	\[
	\pi_1({S}\setminus \mathcal{T},p_0)\cong (\Lambda_1\times \Lambda_2) \rtimes <\iota>,
	\]
	where $p_0$ denotes a point infinitesimally near to $q((O,O))\in S$, the action of $\iota$ on $\Lambda_1\times \Lambda_2$ is given by $(a,b)\rightarrow (-a,-b)$, and the element $\iota$ corresponds to a (single) loop around $q((O,O))\in S$ (and hence $\iota^2=1$). For any $2$-torsion point $T$ in $E_1\times E_2$, let $\iota_T \in \pi_1({S}\setminus \mathcal{T},p_0)$ denote the element corresponding to a (single) loop around the point $q(T) \in S$\footnote{This element is well-defined only after a choice of a path between $q(p_0)$ and a point infinitesimally near to $q(T)$ has been made. This choice can be done arbitrarily and it is in fact irrelevant for our purposes. We will assume anyway that the path chosen is the geodetic, using the distance induced by the universal cover \[\R^4 \xrightarrow{(e_1|e_2|e_3|e_4)} \C \times \C \rightarrow \C/\Lambda_1 \times \C/\Lambda_2 \cong E_1 \times E_2 \xrightarrow{q} S,\] where the $\R^4$ on the left is endowed with the Euclidean metric.}. We have that, if $T=\sum_{i=1}^{4}\frac{\epsilon_i}{2}e_i$, where $\epsilon_i \in \{0,1\}$, then
	\[
	\iota_T=\left(\sum_{i=1}^{4}{\epsilon_i}e_i\right)\iota.
	\]
	Let $H \subset (\Lambda_1\times \Lambda_2) \rtimes \Z/2\Z$ denote the minimal normal subgroup containing the $\iota_T$'s, for $T \in \mathcal{T}^g \defeq \mathcal{T} \setminus \mathcal{T}^b$. 
	We note that $(e_1+e_4)/2$, $(e_1+e_3)/2$, $(e_1+e_3+e_4)/2 \in \mathcal{T}^g$, and hence $e_1=(e_1+e_4)+(e_1+e_3)-(e_1+e_3+e_4) \in H$. Analogously one has that $e_i \in H$ for every $1 \leq i \leq 4$. Therefore, $H=(\Lambda_1\times \Lambda_2) \rtimes <\iota>$.
	
	We now observe that, in the blown-up surface $\tilde{S}\setminus \mathcal{L}^b$, the loop $\iota_T$ becomes trivial for any point $T \in  \mathcal{T}^g$. In fact, a small topological neighborhood $L_T^{\epsilon}$ of the exceptional line $L_T\defeq b^{-1}(T) \subset \tilde{S}$ is retractible on $L_T$ itself, which is simply connected. 
	
	Therefore, by Van Kampen's Theorem applied to $\tilde{S} \setminus \mathcal{L}^b= (\tilde{S} \setminus \mathcal{L}) \cup_{T \in \mathcal{T}^g} L_T^{\epsilon}$, we have that
	\[
	\pi_1(\tilde{S} \setminus \mathcal{L}^b)\cong \faktor{(\Lambda_1\times \Lambda_2) \rtimes <\iota>}{H} \cong \{1\},
	\]
	as we wanted to prove.
\end{proof}

\begin{remark}\label{Rmk:Kummerotherproof?}
	Corvaja and Zannier proved in \cite{articoloHP} that there are Zariski-dense $K$-rational points in $E_1 \times E_1 / \{\pm 1\}$ that are not image of $K$-rational points in $E_1 \times E_2$.  It is possible that combining the technique presented in \cite{articoloHP} with other results, for instance the ones contained in \cite{zannierHP}, one may obtain a different proof of Proposition \ref{Cor:kummer}, at least in the case where $E_1=E_2$.
\end{remark}

\ack

The author would like to heartily thank his advisor Umberto Zannier, for having given him the opportunity to work on these topics, and providing important insights. The author would also like to thank Pietro Corvaja for fruitful discussions and for Remark \ref{Rmk:zariskidensity}, as well as the anonymous referees for their comments and remarks. Finally, the author would like to thank Jean-Louis Colliot-Thélène for making him notice that the main result of Swinnerton-Dyer \cite{cubiche-swinndyer} in fact implies the $2$-dimensional case of Theorem \ref{cubiche}.

\bibliographystyle{apalike}      
\bibliography{NonrationalHP}

\end{document}